\documentclass[11pt]{article}

\usepackage[a4paper,margin=1.05in]{geometry}
\usepackage{amsmath,amssymb,amsthm,mathtools}
\usepackage[T1]{fontenc}
\usepackage{libertinus}
\usepackage{microtype}
\usepackage{tikz}
\usepackage{float}
\usetikzlibrary{patterns,patterns.meta}
\usepackage{xcolor}
\usepackage[colorlinks=true,linkcolor=black,citecolor=black,urlcolor=black]{hyperref}

\newcommand{\K}{\mathbb K}

\newcommand{\N}{\mathbb N}
\newcommand{\eps}{\varepsilon}
\newcommand{\whotimes}{\widehat\otimes}

\newtheorem{theorem}{Theorem}[section]
\newtheorem{maintheorem}{Theorem}

\newtheorem{lemma}[theorem]{Lemma}
\newtheorem{proposition}[theorem]{Proposition}
\newtheorem{corollary}[theorem]{Corollary}

\title{The phase diagram of injective-to-projective tensor distortion}
\author{Daniel N\'u\~nez-Alarc\'on \and Daniel Marinho Pellegrino \and Joedson Silva dos Santos}
\date{}

\begin{document}
\maketitle
\begin{abstract}
For finite-dimensional Banach spaces $E$ and $F$, set
\[
 \rho(E,F):=\sup_{0\neq z\in E\otimes F}\frac{\pi(z)}{\varepsilon(z)}.
\]
The square growth of $\rho(\ell_p^d,\ell_q^d)$ was determined by Bonet, Defant, Peris and Ramanujan. We study the rectangular problem with the two dimensions varying independently. If $r=\min\{n,m\}$ and
$\eta(t)=\min\{1/t,1/t'\}$, then, whenever $p$ and $q$ lie on the same side of $2$,
\[
 \rho(\ell_p^n,\ell_q^m)\asymp_{p,q}
 r^{1/2}\min\{n^{\eta(p)},m^{\eta(q)}\}.
\]
Thus the classical square exponent splits into two dimensional scales.
In the mixed range $1\le p\le2\le q\le\infty$, writing $a=1/p$ and $b=1/q$, we obtain the lower estimate
\[
 \rho_{p,q}(n,m)\gtrsim_{p,q}
 \max\!\left\{r^{\min\{a+b,2-a-b\}},
 m^{b-1/2}r^{1/2}\min\{n^{1-a},m^{1/2}\}\right\},
\]
and the upper estimate
\[
 \rho_{p,q}(n,m)\lesssim_{p,q}
 \min\!\left\{n^{1-a}r^{1-b},m^b r^a,r\right\}.
\]
Moreover, if \((n-m)(a+b-1)\le0\), then
\[
\rho_{p,q}(n,m)\asymp_{p,q}
r^{\min\{a+b,\,2-a-b\}}.
\]

In the interior mixed range $1<p<2<q<p'$, if
\[
 \alpha_{p,q}:=
 \frac{(a+b-1)(\frac12-b)}{a-\frac12},
\]
then
\[
 \rho_{p,q}(n,m)\lesssim_{p,q}m^{1-\alpha_{p,q}},
 \qquad
 \sup_{n\ge1}\rho_{p,q}(n,m)\asymp_{p,q}m^{1-\alpha_{p,q}}.
\]
The corresponding statements in the reversed mixed range follow by duality and symmetry.
\end{abstract}

\medskip
\noindent\textbf{Keywords.} Injective tensor norm; projective tensor norm; finite-dimensional $\ell_p$ spaces; random matrices; absolutely summing operators; nuclear norm.

\noindent\textbf{2020 Mathematics Subject Classification.} 46B28; 46B45; 46G25; 47B10.

\begingroup
\small
\setcounter{tocdepth}{1}
\tableofcontents
\endgroup

\section{Introduction}

Let $\K\in\{\mathbb R,\mathbb C\}$. For finite-dimensional Banach spaces $E$ and $F$, let $\eps_{E,F}$ and $\pi_{E,F}$ denote the injective and projective norms on $E\otimes F$. We write
\[
 \rho(E,F):=\sup_{0\neq z\in E\otimes F}\frac{\pi(z)}{\eps(z)}
 =\left\|I_{E,F}:E\whotimes_{\eps}F\longrightarrow E\whotimes_{\pi}F\right\|.
\]
For $1\le p,q\le\infty$ and $n,m\in\N$, set
\[
 \rho_{p,q}(n,m):=\rho(\ell_p^n,\ell_q^m),
 \qquad
 \rho_{p,q}(d):=\rho_{p,q}(d,d).
\]
Equivalently, $\rho(E,F)$ is the optimal comparison between the nuclear norm and the operator norm of finite-rank maps $E^*\to F$.

The comparison between the injective and projective tensor norms belongs to the classical metric theory of tensor products and operator ideals; standard references include Defant--Floret \cite{DefantFloret} and Ryan \cite{Ryan}, while Pisier \cite{PisierGT} places these questions in the broader Grothendieck-theoretic setting.  For the specific $\eps$--$\pi$ comparison on sequence spaces, Defant and Mascioni \cite{DefantMascioni1990} introduced a limit-order theory for $\eps$--$\pi$ continuous operators and computed the relevant diagonal orders.  Building on this circle of ideas, Bonet, Defant, Peris and Ramanujan \cite[Proposition~3 and the remark following it]{BonetDefantPerisRamanujan1994} determined the complete square growth
\[
 \rho_{p,q}(d)\asymp_{p,q}d^{1/\mu(p,q)},
\]
where
\[
 \frac1{\mu(p,q)}=
 \begin{cases}
 \frac32-\max\{1/p,1/q\},&1\le p,q\le2,\\[2mm]
 \frac12+\min\{1/p,1/q\},&2\le p,q\le\infty,\\[2mm]
 \min\{1/p+1/q,\,2-1/p-1/q\},&\text{otherwise}.
 \end{cases}
\]
Thus the square exponent was completely determined in 1994.  Rectangular tensor-product identities with two independent dimensions also occur in the work of Defant and Michels \cite{DefantMichels2006}, who obtained asymptotically optimal estimates for identities from $E^n\whotimes_{\eps}F^m$ into the Hilbert space $\ell_2^{nm}$, together with applications to geometric parameters of tensor products.  Here the target is instead the projective norm on the same rectangular tensor product.  Gupta, Misra and Ray \cite{GuptaMisraRay} later returned to closely related tensor-product norm ratios in connection with variants of Grothendieck's inequality and obtained, among other results, diagonal and endpoint estimates.  Aubrun, Lami, Palazuelos, Szarek and Winter \cite{AubrunEtAl2020} placed tensor-norm ratios in a different, operational setting: they related them to maximal global-to-local bias advantages in XOR games in generalized probabilistic theories and, in particular, identified the endpoint $\rho(\ell_1^n,X)$ with a finite $1$-summing norm.  A distinct asymptotic direction, in which tensor powers rather than the two dimensions vary independently, has been developed by Aubrun and M\"uller-Hermes \cite{AubrunMullerHermes2025,AubrunMullerHermes2026}.

The rectangular problem begins when the two dimensions are allowed to grow independently. We therefore put
\[
 r:=\min\{n,m\},
 \qquad
 \eta(t):=\min\left\{\frac1t,\frac1{t'}\right\}.
\]
The rectangular growth is completely determined when $p$ and $q$ lie on the same side of $2$.

\begin{maintheorem}[Same-side rectangular formula]\label{thm:A}
Let $1\le p,q\le\infty$ and $n,m\in\N$. Assume either
\[
 1\le p,q\le2
 \qquad\text{or}\qquad
 2\le p,q\le\infty.
\]
Then
\[
 \rho_{p,q}(n,m)
 \asymp_{p,q}
 r^{1/2}\min\{n^{\eta(p)},m^{\eta(q)}\}.
\]
The comparison constants depend only on $p,q$ and the scalar field. In the Hilbertian case,
\[
 \rho_{2,2}(n,m)=r.
\]
\end{maintheorem}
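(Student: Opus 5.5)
We prove the two-sided estimate by combining general upper bounds with explicit test tensors. By symmetry ($\rho(E,F)=\rho(F,E)$) and duality we may reduce the work. Nuclear and operator norms are dual to each other under trace duality, so $\rho(E,F)=\rho(E^*,F^*)$. This lets us pass between the two ranges $1\le p,q\le2$ and $2\le p,q\le\infty$: the case $p,q\ge2$ follows from $p',q'\le2$, and $\eta(t)=\eta(t')$. So it suffices to treat $2\le p,q\le\infty$. In that range $\eta(p)=1/p$ and $\eta(q)=1/q$, and the target becomes $r^{1/2}\min\{n^{1/p},m^{1/q}\}$.

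\textbf{Upper bound.} View $z\in\ell_p^n\otimes\ell_q^m$ as an operator $T:\ell_{p'}^n\to\ell_q^m$, so that $\eps(z)=\|T\|$ and $\pi(z)=\nu(T)$. We use two factorizations.

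First, $T$ factors through the identity $\ell_{p'}^n\to\ell_2^n$, whose norm is $1$ since $p'\le2$. Its range has dimension at most $r$, so on an $r$-dimensional subspace $G\subset\ell_q^m$ we have $\nu\le r\,\|\cdot\|_{\ell_2\to\ell_q}\cdot(\text{a projection constant})$. To avoid the projection constant, factor instead as $\ell_{p'}^n\xrightarrow{\mathrm{id}}\ell_2^n\xrightarrow{S}\ell_2^m\xrightarrow{\mathrm{id}}\ell_q^m$, where $S$ has rank at most $r$. Then
\[
\nu(T)\le \|\mathrm{id}_{p'\to2}\|\,\nu(S)\,\|\mathrm{id}_{2\to q}\|\le r\,\|S\|.
\]
The task is to control $\|S\|$ by $\|T\|$ times a dimensional factor. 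A cleaner route is via $2$-summing norms: $\nu(T)\le \pi_2(T^*)\,\pi_2(\cdot)$-type estimates. Concretely, $\pi(z)\le \sqrt{r}\,\pi_2$-norms and $\pi_2(\mathrm{id}:\ell_\infty^n\to\ell_p^n)\le n^{1/p}$. This gives
\[
\pi(z)\lesssim r^{1/2}n^{1/p}\eps(z),
\]
and symmetrically
\[
\pi(z)\lesssim r^{1/2}m^{1/q}\eps(z).
\]
The mechanism is as follows. Write $T=T\circ\mathrm{id}$ with $\mathrm{id}:\ell_{p'}^n\to\ell_1^n\to\ell_{p'}^n$, and use Grothendieck/Pietsch factorization: $\nu(T)\le \pi_2(A)\,\pi_2(B^*)$ for $T=BA$. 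Since $\operatorname{rank}\le r$, the composition through an $r$-dimensional Hilbert space costs $\sqrt r$ via the Hilbert–Schmidt estimate $\|\cdot\|_{HS}\le\sqrt r\,\|\cdot\|$, and the remaining $\pi_2$ factor is the $\pi_2$-norm of a diagonal identity, $\asymp n^{1/p}$ (Garling–Gordon). Finally, the bound $\rho\le r$ in general, together with $\rho_{2,2}(n,m)=r$, is standard: singular values give $\nu(T)=\sum s_i\le r\,s_1$, with equality for a partial isometry of rank $r$.

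\textbf{Lower bound.} Assume $n\ge m$, the other case being symmetric. We need tensors attaining $r^{1/2}\min\{n^{1/p},m^{1/q}\}$.
\begin{itemize}
\item When $m^{1/q}\le n^{1/p}$, take a random $\pm1$ matrix of size $k\times m$ with $k\le n$ chosen suitably, e.g.\ $k=\min\{n,\ m^{p/q}\}\ge m$. Bound the injective norm by Bennett/Chevet-type estimates, $\|G:\ell_{p'}^k\to\ell_q^m\|\lesssim k^{1/p}\,m^{1/2}+k^{1/2}m^{1/q}$, up to constants. Bound the nuclear norm from below by trace duality against the same matrix, using $\|G\|_{\ell_q^m\to\ell_{p'}^k}$-type estimates.
\item Otherwise, use block-diagonal tensors built from Hilbertian blocks: $k$ copies of $\ell_2$-type unit tensors, such as Walsh/Fourier matrices of size $s$. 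Optimize the block size, choosing $s$ to balance the scales.
\end{itemize}

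The main obstacle will be this lower-bound construction in the unbalanced regime. There the dimension $n$ must be exploited beyond $r=m$, and the right aspect ratio for the random matrix has to be chosen so that the Chevet bound for $\eps$ and the duality bound for $\pi$ match exactly at $r^{1/2}\min\{n^{1/p},m^{1/q}\}$.
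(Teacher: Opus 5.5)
Your architecture is the right one, and it is the paper's: reduce by duality, bound $\rho$ above by $n^{\eta(p)}$ times a $\sqrt r$ endpoint, and bound it below with a random sign matrix tested against its own bilinear form. The duality reduction and the Hilbertian case are fine. But neither main half is established, and several steps you state are false.

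\textbf{Upper bound.} Your factorization through $\ell_1^n$ correctly gives $\rho_{p,q}(n,m)\le n^{1/p}\rho(\ell_\infty^n,\ell_q^m)$. So everything rests on the rectangular endpoint $\rho(\ell_\infty^n,\ell_q^m)\lesssim_q\sqrt{\min\{n,m\}}$ for $q\ge2$, and in particular on the bound $\sqrt m$ uniformly in $n$. Your sketch does not prove it, and its ingredients fail:
\begin{itemize}
\item The inequality $\nu(BA)\le\pi_2(A)\pi_2(B^*)$ is false. For $I_{\ell_1^n}=(\ell_2^n\to\ell_1^n)\circ(\ell_1^n\to\ell_2^n)$ the right side is at most $K_G\sqrt n$, while $\nu(I_{\ell_1^n})=n$.
\item For $p>2$ one has $\pi_2(\mathrm{id}:\ell_\infty^n\to\ell_p^n)=\sqrt n$ (test on the unit vectors), not $n^{1/p}$.
\item Paying $\sqrt r$ by passing through an $r$-dimensional Hilbert space presupposes $\gamma_2(S)\lesssim\|S\|$ for $S:\ell_1^n\to\ell_q^m$. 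This fails for $q>2$: the adjoint of an almost isometric embedding $\ell_{q'}^m\hookrightarrow\ell_\infty^N$ has norm about $1$ but $\gamma_2\gtrsim m^{1/2-1/q}$.
\end{itemize}
The paper avoids all of this by dualizing the other way, to $1\le p,q\le2$. There $\ell_1^n\whotimes_\pi\ell_q^m=\ell_1^n(\ell_q^m)$ makes $\pi(z)=\sum_i\|y_i\|_q$ explicit (Proposition~\ref{prop:l1below}). Cauchy--Schwarz plus the cotype-$2$ estimate (mixed norms and Khintchine) gives $\sqrt n\,\eps(z)$. Averaging over the sign functionals $m^{-1/q'}(\delta_j)_j\in B_{\ell_{q'}^m}$, together with $\|y\|_q\le m^{1/q-1/2}\|y\|_2$, gives $\sqrt m\,\eps(z)$.

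\textbf{Lower bound.} Your injective estimate $k^{1/p}m^{1/2}+k^{1/2}m^{1/q}$ is wrong; already at $p=q=2$ it gives $\sqrt{km}$ instead of $\sqrt k+\sqrt m$. The correct order, from Chevet's inequality or the Lata{\l}a--Strzelecka bounds the paper uses, is $\|G:\ell_{p'}^k\to\ell_q^m\|\lesssim_{p,q}k^{1/p}+m^{1/q}$. With your bound the duality certificate is at most $m^{1/q}$, so the computation cannot close. You also never state the bilinear estimate, which is $\|G:\ell_p^k\to\ell_{q'}^m\|\lesssim k^{1/2-1/p}m^{1/2-1/q}(\sqrt k+\sqrt m)$. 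With both bounds holding for one realization and $k=n\ge m$,
\[
\rho_{p,q}(n,m)\gtrsim_{p,q}\frac{nm}{(n^{1/p}+m^{1/q})\,n^{1-1/p}m^{1/2-1/q}}\asymp m^{1/2}\min\{n^{1/p},m^{1/q}\}.
\]
This is the dual form of the paper's Lemma~\ref{lem:lower}, and it needs no case split and no tuning of the aspect ratio.

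Your second case is also insufficient as described. Block-diagonal tensors with square blocks live on an $r\times r$ coordinate square, so their ratio is at most $\rho_{p,q}(m,m)\asymp m^{1/2+1/p}$ when $p>q$. That is strictly below the target $m^{1/2}n^{1/p}$ in the regime $m<n<m^{p/q}$ where you invoke them.
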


When \(n=m\), Theorem~\ref{thm:A} reduces to the square formula. Indeed, if
\(n=m=d\), then \(r=d\) and the theorem gives
\[
\rho_{p,q}(d,d)
\asymp_{p,q}
d^{1/2+\min\{\eta(p),\eta(q)\}}.
\]
For \(1\le p,q\le2\), this exponent is
\[
\frac32-\max\left\{\frac1p,\frac1q\right\},
\]
and for \(2\le p,q\le\infty\), it is
\[
\frac12+\min\left\{\frac1p,\frac1q\right\}.
\]
These are the square exponents obtained by Bonet, Defant, Peris and
Ramanujan \cite{BonetDefantPerisRamanujan1994}. Theorem~\ref{thm:A} determines the dependence on the two dimensions separately.

For $p=q$ this becomes
\[
 \rho(\ell_p^n,\ell_p^m)
 \asymp_p
 r^{\frac12+\eta(p)},
\]
which recovers the square exponent when $n=m$. The rectangular feature of Theorem~\ref{thm:A} is the interaction of the two scales
\[
 r^{1/2}n^{\eta(p)}
 \qquad\text{and}\qquad
 r^{1/2}m^{\eta(q)}.
\]
The square theorem gives the correct lower scale on the diagonal coordinate copy of dimension $r$, while a genuinely rectangular argument is needed for an upper estimate uniform in the larger dimension.

The proof below $2$ uses the following rectangular endpoint, a quantitative form of the finite $1$-summing identity of \cite[Proposition~13]{AubrunEtAl2020}:
\[
 \rho(\ell_1^n,\ell_q^m)\asymp_q\sqrt{\min\{n,m\}},
 \qquad 1\le q\le2.
\]
The two rectangular upper bounds at this endpoint follow from direct Khintchine estimates, while the matching lower estimate in Theorem~\ref{thm:A} is obtained from a random sign matrix with two simultaneous operator-norm controls. The range above $2$ follows by tensor duality.

The mixed range has a different geometry. The next theorem gives sharp polynomial
orders on an explicit part of that range and two-sided estimates elsewhere.

\begin{maintheorem}[Mixed rectangular bounds and sharp-order region]\label{thm:B}
Let $1\le p\le2\le q\le\infty$, put
\[
 a:=\frac1p,\qquad b:=\frac1q,\qquad r:=\min\{n,m\},
 \qquad \gamma_{\rm mix}:=\min\{a+b,2-a-b\}.
\]
Then
\[
 \max\!\left\{
 r^{\gamma_{\rm mix}},\,
 m^{b-1/2}r^{1/2}\min\{n^{1-a},m^{1/2}\}
 \right\}
 \lesssim_{p,q}\rho_{p,q}(n,m)
 \lesssim_{p,q}
 \min\left\{n^{1-a}r^{1-b},\,m^b r^a,\,r\right\}.
\]
If
\[
 (n-m)(a+b-1)\le0,
\]
then the sharp polynomial order is determined:
\[
 \rho_{p,q}(n,m)=r^{\gamma_{\rm mix}}.
\]
If, in addition, $1<p<2<q<p'$, define
\[
 \alpha_{p,q}:=
 \frac{(a+b-1)(\frac12-b)}{a-\frac12}.
\]
Then
\[
 \rho_{p,q}(n,m)\lesssim_{p,q}m^{1-\alpha_{p,q}},
 \qquad
 \sup_{n\ge1}\rho_{p,q}(n,m)\asymp_{p,q}m^{1-\alpha_{p,q}}.
\]
By duality and symmetry the corresponding statement holds when $1\le q\le2\le p\le\infty$.
\end{maintheorem}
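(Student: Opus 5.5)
We split the argument into four parts: the three upper bounds, the two lower bounds, the sharp region, and the interior range.

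\emph{Upper bounds.} Write $\rho_{p,q}(n,m)=\sup\nu(T)/\|T\|$ over $T:\ell_{p'}^n\to\ell_q^m$, with $\nu$ the nuclear norm. We factor through the Hilbertian or $\ell_1$ endpoints and pay for the change of norms.

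\begin{itemize}
\item The bound $r$: every rank-$r$ operator satisfies $\nu(T)\le r\|T\|$. This follows from Auerbach bases, or from factoring through $\ell_2^r$ and using $\rho_{2,2}=r$ (Theorem~\ref{thm:A}) together with the fact that $\pi/\eps$ on a rank-$r$ tensor is at most $r$.
\item The bound $m^b r^a$: compare $\ell_q^m$ with $\ell_\infty^m$, where $\|\mathrm{id}:\ell_\infty^m\to\ell_q^m\|=m^b$. The remaining factor $r^a$ should come from $\rho(\ell_p^n,\ell_\infty^m)\lesssim r^{a}$. This endpoint is dual to Theorem~\ref{thm:A}'s $\ell_1$ case: for $p=1$ it gives $r$, and for $p=2$ it is $\rho(\ell_2,\ell_\infty)\asymp\sqrt r$. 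We get it by interpolating in $a$, either via complex interpolation of the bilinear ratio or by directly factoring through a rank-$r$ restriction.
\item The bound $n^{1-a}r^{1-b}$: this is the dual version, using $\|\ell_p^n\to\ell_1^n\|=n^{1-a}$ and $\rho(\ell_1^n,\ell_q^m)\lesssim r^{1-b}$ for $q\ge2$. We obtain the latter the same way from $\rho(\ell_1,\ell_2)\asymp\sqrt r$ and $\rho(\ell_1,\ell_\infty)=r$.
\end{itemize}

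\emph{Lower bounds.} For $r^{\gamma_{\rm mix}}$ we restrict to coordinate copies $\ell_p^r\otimes\ell_q^r$, which are $1$-complemented, so both norms restrict isometrically. The square theorem of Bonet--Defant--Peris--Ramanujan then gives $\rho_{p,q}(r)\asymp r^{\gamma_{\rm mix}}$. For the second term we use an $m\times k$ random sign matrix with $k\le n$ chosen optimally, and pair it against its own transpose to lower-bound the nuclear norm by trace duality, $\nu(T)\ge |\mathrm{tr}(TS)|/\|S\|$. This needs two simultaneous operator-norm estimates, namely for $\ell_{p'}^k\to\ell_q^m$ and for the dual pairing. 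We would reuse the Chevet/Bennett-type bound for random sign matrices that the paper uses for the lower estimate in Theorem~\ref{thm:A}; the computation then yields $m^{b-1/2}r^{1/2}\min\{n^{1-a},m^{1/2}\}$.

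\emph{Sharp region.} Suppose $n\le m$ and $a+b\ge1$, so $\gamma_{\rm mix}=2-a-b$ and $r=n$. Then $n^{1-a}r^{1-b}=r^{2-a-b}$, which matches the lower bound. Suppose instead $n\ge m$ and $a+b\le1$. Then $m^br^a=r^{a+b}=r^{\gamma_{\rm mix}}$. So the upper and lower bounds coincide in both cases, up to constants depending on $p,q$. We read the statement's ``$=$'' as ``$\asymp_{p,q}$'': the equality cannot hold literally, since the lower bound $r^{\gamma_{\rm mix}}$ comes from the square theorem, which only gives $\asymp$.

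\emph{Interior range $1<p<2<q<p'$.} Here $a+b>1$ and the problem is $n\gg m$.

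For the upper bound we interpolate between the two available factorizations. We split $T$ using a Maurey--Pietsch/Lewis change of density, so that $T:\ell_{p'}^n\to\ell_q^m$ factors through a weighted $\ell_2^m$ or $\ell_{p'}^N$ with $N$ a free parameter. We then bound the nuclear norm by $\min\{N^{1-a}m^{1-b},\ \text{tail cost}\}$. The tail cost should behave like $(n/N)^{-(a-1/2)}$ times a power of $m$, using the type-$2$ decay of $\ell_p$ for the cotype-$q$ side. Optimizing over $N$ equates the exponents. The value $\alpha_{p,q}=(a+b-1)(1/2-b)/(a-1/2)$ is exactly the crossing point of these two linear exponents, which makes this route plausible.

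For the matching lower bound we take $n=m^{\beta}$ with the optimal $\beta$ in the random-sign lower bound, perhaps combined with block tensoring of the square extremal example. We then check that the supremum over $n$ of the lower bound attains $m^{1-\alpha}$.

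\emph{Main obstacle.} The main obstacle is the interior upper bound $\rho\lesssim m^{1-\alpha}$ uniformly in $n$. The factorization that exploits both the type of $\ell_p$ and the cotype of $\ell_q$ at the right scale is delicate. The first thing to try is the Bourgain--Tzafriri/Lewis restricted-invertibility decomposition of the columns into $\lesssim m$ well-spread blocks.

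The last sentence of the statement follows from $\rho(E,F)=\rho(F,E)$ and tensor duality $\rho(E,F)=\rho(E^*,F^*)$.
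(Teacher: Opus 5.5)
\textbf{Main gap: the interior range $1<p<2<q<p'$.} Your treatment of this range leaves a genuine gap in both directions.

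\emph{Upper bound.} The uniform bound is not proved. The change-of-density/tail-cost sketch contains no actual estimate; it only recovers $\alpha_{p,q}$ by matching exponents. The missing idea is trace duality combined with symmetry:
\begin{itemize}
\item Since $\nu(T)=\sup_{\|S\|\le1}|\operatorname{tr}(TS)|$, one has $\rho_{p,q}(n,m)=\sup|\operatorname{tr}(TS)|$ over contractions $T:\ell_{p'}^n\to\ell_q^m$ and $S:\ell_q^m\to\ell_{p'}^n$. Every such $U=TS$ has $L_{p'}$-factorization norm $\gamma_{p'}(U)\le1$.
\item Averaging $G^{-1}UG$ over the signed permutations of $\ell_q^m$ gives $\frac{\operatorname{tr}U}{m}I$. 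Convexity of $\gamma_{p'}$ then yields $\rho_{p,q}(n,m)\le m/\gamma_{p'}(I_{\ell_q^m})$ for every $n$.
\item Conversely, approximating an $L_{p'}(\mu)$-factorization of $I_{\ell_q^m}$ by conditional expectations on finite partitions gives $\sup_n\rho_{p,q}(n,m)=m/\gamma_{p'}(I_{\ell_q^m})$.
\item The Gluskin--Pietsch--Puhl estimate $\gamma_{p'}(I_{\ell_q^m})\asymp m^{\alpha_{p,q}}$ \cite{GluskinPietschPuhl1980} then gives both claims.
\end{itemize}

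\emph{Lower bound.} Your plan provably cannot reach this order. For every $m\times k$ sign matrix $A$, testing on a normalized row gives $\|A:\ell_{p'}^k\to\ell_q^m\|\ge k^a$. Averaging $\|A^{T}x\|_{p'}$ over $x=m^{-1/q}(\pm1,\dots,\pm1)$ gives $\|A:\ell_p^k\to\ell_{q'}^m\|\ge m^{1/2-b}k^{1-a}$. Hence the self-pairing bound $km/(\|A\|\,\|A\|)$ never exceeds $m^{b+1/2}$, whatever $k$ is, whereas
\[
 m^{1-\alpha_{p,q}}=m^{b+1/2}\,m^{(1/2-b)^2/(a-1/2)}.
\]
The diagonal tensor gives only $m^{2-a-b}$, which is also strictly smaller. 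The extremal tensors have to come from a near-optimal factorization of $I_{\ell_q^m}$ through some $\ell_{p'}^N$, and nothing in your plan produces one.

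\textbf{Further points.}

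First, the ``$=$'' in the sharp region is literal. Your reason for dismissing it is wrong: the square theorem is not needed.
\begin{itemize}
\item For $z_r=\sum_{j\le r}e_j\otimes e_j$ one has $\eps(z_r)=r^{\max\{0,a+b-1\}}$. Testing against the diagonal bilinear form, of norm $r^{\max\{0,1-a-b\}}$, gives $\pi(z_r)/\eps(z_r)\ge r^{\gamma_{\rm mix}}$ with constant $1$.
\item The endpoint bound $\rho(\ell_1^n,\ell_q^m)\le n^{1-b}$ also holds with constant $1$. It follows from H\"older's inequality and
\[
 \sum_i\|y_i\|_q^q\le\mathbb E\Bigl\|\sum_i\eta_iy_i\Bigr\|_q^q\le\eps(z)^q .
\]
\item Duality then gives $\rho(\ell_p^n,\ell_\infty^m)=\rho(\ell_1^m,\ell_{p'}^n)\le m^a$, again with constant $1$.
\end{itemize}
Together these give exact equality in both cases of the sharp region.

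Second, your interpolation route to the endpoint estimates is not justified. The contractive inequalities that hold in general for complex interpolation are $[\pi_0,\pi_1]_\theta\le\pi_\theta$ and $\eps_\theta\le[\eps_0,\eps_1]_\theta$. Both go the wrong way for interpolating the identity $\eps\to\pi$. Prove the endpoints directly instead: the display above gives the $n$-bound, and Khintchine averaging over the functionals $m^{-1/q'}(\pm1,\dots,\pm1)$ gives the $m$-bound.

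Third, your random-matrix route to the second lower bound can be made to work at $k=n$, using the norm orders $m^bk^{a-1/2}+k^a$ and $m^{1-b}+m^{1/2-b}k^{1-a}$. The paper obtains that bound in one line, from $\rho_{p,2}(n,m)\le m^{1/2-b}\rho_{p,q}(n,m)$ (Lemma~\ref{lem:isomorphism}) and Theorem~\ref{thm:A}.
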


The condition
\[
(n-m)(a+b-1)\le0
\]
has a simple interpretation. If \(a+b>1\), the sharp polynomial order
is obtained when \(n\le m\). If \(a+b<1\), it is obtained when
\(n\ge m\). On the line \(a+b=1\), the condition is automatic; in that
case
\[
\rho(\ell_p^n,\ell_{p'}^m)=\min\{n,m\}.
\]

\definecolor{ssblue}{RGB}{218,232,245}
\definecolor{exactgreen}{RGB}{218,238,225}
\definecolor{openorange}{RGB}{249,229,204}
\begin{figure}[H]
\centering
\begin{tikzpicture}[x=4.25cm,y=4.25cm,every node/.style={font=\scriptsize}]
\newcommand{\phasepanel}[1]{%
  \fill[ssblue] (0,0) rectangle (.5,.5);
  \fill[ssblue] (.5,.5) rectangle (1,1);
  \ifnum#1=1
    \fill[exactgreen] (.5,0)--(.5,.5)--(1,0)--cycle;
    \fill[openorange] (.5,.5)--(1,.5)--(1,0)--cycle;
    \fill[openorange] (0,.5)--(.5,.5)--(0,1)--cycle;
    \fill[exactgreen] (0,1)--(.5,.5)--(.5,1)--cycle;
  \else
    \fill[openorange] (.5,0)--(.5,.5)--(1,0)--cycle;
    \fill[exactgreen] (.5,.5)--(1,.5)--(1,0)--cycle;
    \fill[exactgreen] (0,.5)--(.5,.5)--(0,1)--cycle;
    \fill[openorange] (0,1)--(.5,.5)--(.5,1)--cycle;
  \fi
  \draw[black!70] (0,0) rectangle (1,1);
  \draw[black!45] (.5,0)--(.5,1);
  \draw[black!45] (0,.5)--(1,.5);
  \draw[dashed,black!55] (.5,.5)--(1,0);
  \draw[dashed,black!55] (0,1)--(.5,.5);
  \node[below=2pt] at (0,0) {$0$};
  \node[below=2pt] at (.5,0) {$1/2$};
  \node[below=2pt] at (1,0) {$1$};
  \node[left=2pt] at (0,.5) {$1/2$};
  \node[left=2pt] at (0,1) {$1$};
  \node[text=black!70] at (.25,.25) {Theorem A};
  \node[text=black!70] at (.75,.75) {Theorem A};
  \node[fill=white,fill opacity=.86,text opacity=1,inner sep=1pt,
        anchor=south west] at (.535,.455) {$u+v=1$};
}
\begin{scope}
  \phasepanel{1}
  \node[font=\small] at (.5,1.10) {$n\ge m$};
  \node[below=14pt,font=\footnotesize] at (.5,0) {$u=1/p$};
  \node[rotate=90,above=15pt,font=\footnotesize] at (0,.5) {$v=1/q$};
\end{scope}
\begin{scope}[xshift=5.35cm]
  \phasepanel{2}
  \node[font=\small] at (.5,1.10) {$n\le m$};
  \node[below=14pt,font=\footnotesize] at (.5,0) {$u=1/p$};
  \node[rotate=90,above=15pt,font=\footnotesize] at (0,.5) {$v=1/q$};
\end{scope}
\end{tikzpicture}

\vspace{4mm}
\begin{tikzpicture}[every node/.style={font=\scriptsize},x=1cm,y=1cm]
  \fill[ssblue] (0,1.00) rectangle (.34,1.15); \draw[black!55] (0,1.00) rectangle (.34,1.15);
  \node[anchor=west] at (.48,1.075) {same-side region: complete formula (Theorem~\ref{thm:A})};
  \fill[exactgreen] (0,.50) rectangle (.34,.65); \draw[black!55] (0,.50) rectangle (.34,.65);
  \node[anchor=west] at (.48,.575) {mixed region: sharp polynomial order (Theorem~\ref{thm:B})};
  \fill[openorange] (0,0) rectangle (.34,.15); \draw[black!55] (0,0) rectangle (.34,.15);
  \node[anchor=west] at (.48,.075) {mixed region: two-sided bounds; sharp order open};
\end{tikzpicture}

\caption{Rectangular phase diagram in reciprocal coordinates $u=1/p$ and
$v=1/q$. Both mixed quadrants are displayed. Blue marks the same-side regions
completely determined by Theorem~\ref{thm:A}; green marks the mixed regions where Theorem~\ref{thm:B} determines
the sharp polynomial order; orange marks the remaining mixed regions,
where the present results give two-sided bounds. The dashed segments are the
transition line $u+v=1$. Reversing the order of $n$ and $m$ interchanges the
green and orange mixed triangles.}
\label{fig:phase-rectangular}
\end{figure}

On the transition line $1/p+1/q=1$, Theorem~\ref{thm:B} reaches the universal dimension bound and gives, for all $n,m$,
\[
 \rho(\ell_p^n,\ell_{p'}^m)=\min\{n,m\}.
\]
For $n=m=d$, it yields the exact identity
\[
 \rho_{p,q}(d,d)=d^{\min\{1/p+1/q,\,2-1/p-1/q\}}.
\]
Thus the classical mixed square exponent is recovered in every finite
dimension, up to constants depending only on the exponents. The same
sharp polynomial order also holds on an entire nonsquare region.

The operator interpretation of $\rho$ converts Theorem~\ref{thm:A} into a complete same-side rectangular estimate for the ratio between nuclear and operator norms.  In the real case, the tensor-ratio identity of Aubrun, Lami, Palazuelos, Szarek and Winter transfers these estimates to asymmetric XOR games in generalized probabilistic theories; see Section~\ref{sec:consequences}.  Here the two local dimensions remain independent, so the rectangular estimates yield information not visible in the square specialization.

Throughout, $A\lesssim_{p,q}B$ means $A\le C_{p,q}B$ with a constant independent of the dimensions, and $A\asymp_{p,q}B$ means that both inequalities hold.

\section{Tensorial preliminaries}\label{sec:prelim}

For $1\le t\le\infty$, let $t'$ be the conjugate exponent. The canonical basis of $\K^k$ is denoted by $(e_i)_{i=1}^k$.

For finite-dimensional Banach spaces $E$ and $F$, the canonical pairing is determined by
\[
 \langle x^*\otimes y^*,x\otimes y\rangle=x^*(x)y^*(y).
\]
The standard duality identifications are
\[
 (E\whotimes_{\pi}F)^*\cong E^*\whotimes_{\eps}F^*,
 \qquad
 (E\whotimes_{\eps}F)^*\cong E^*\whotimes_{\pi}F^*.
\]

\begin{lemma}\label{lem:duality}
For finite-dimensional Banach spaces $E$ and $F$,
\[
 \rho(E,F)=\rho(E^*,F^*)=\rho(F,E).
\]
\end{lemma}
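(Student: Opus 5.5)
The plan is to prove the two equalities separately. The symmetry $\rho(E,F)=\rho(F,E)$ is purely algebraic, and the duality $\rho(E,F)=\rho(E^*,F^*)$ comes from transposing the identity map $I_{E,F}$.

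For the symmetry, I will use the flip map $\sigma:E\otimes F\to F\otimes E$, $x\otimes y\mapsto y\otimes x$. It is a linear bijection. It is an isometry for the projective norm, since it maps representations $\sum x_i\otimes y_i$ to representations $\sum y_i\otimes x_i$ of the same cost. It is also an isometry for the injective norm, since
\[
\eps(z)=\sup\{|\langle x^*\otimes y^*,z\rangle|:\|x^*\|,\|y^*\|\le1\}
\]
is symmetric in the two factors. Hence $\pi(\sigma z)/\eps(\sigma z)=\pi(z)/\eps(z)$ for every $z\neq0$. Taking suprema gives $\rho(E,F)=\rho(F,E)$.

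For the duality, I will work in finite dimensions, where the norm of a linear map equals the norm of its adjoint. Set $I=I_{E,F}:E\whotimes_{\eps}F\to E\whotimes_{\pi}F$. Using the identifications $(E\whotimes_{\pi}F)^*\cong E^*\whotimes_{\eps}F^*$ and $(E\whotimes_{\eps}F)^*\cong E^*\whotimes_{\pi}F^*$, the adjoint is
\[
I^*:E^*\whotimes_{\eps}F^*\to E^*\whotimes_{\pi}F^*.
\]
The point to check is that $I^*$ is the formal identity on $E^*\otimes F^*$. This holds because both identifications are induced by the same canonical pairing $\langle x^*\otimes y^*,x\otimes y\rangle=x^*(x)y^*(y)$. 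For $w\in E^*\otimes F^*$ and $z\in E\otimes F$ we have $\langle I^*w,z\rangle=\langle w,Iz\rangle=\langle w,z\rangle$, so $I^*w=w$. Therefore
\[
\rho(E,F)=\|I\|=\|I^*\|=\|I_{E^*,F^*}\|=\rho(E^*,F^*).
\]

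The main obstacle is justifying the two duality identifications as isometries. The projective one, $(E\whotimes_{\pi}F)^*\cong E^*\whotimes_{\eps}F^*$ (bilinear forms of norm $\|\cdot\|_\eps$), is standard. The injective one, $(E\whotimes_{\eps}F)^*\cong E^*\whotimes_{\pi}F^*$, holds in finite dimensions because all maps have finite rank and the spaces have the approximation property, so integral and nuclear norms coincide. I will cite it from Defant--Floret or Ryan rather than reprove it. Since the paper already states these identifications as standard, only the identification of $I^*$ with $I_{E^*,F^*}$ above needs verification.
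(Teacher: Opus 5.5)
Your proposal is correct and follows the paper's proof. In both, the flip map is an isometry for $\eps$ and $\pi$, and under the standard duality identifications the adjoint of $I_{E,F}$ is $I_{E^*,F^*}$, which has the same norm. Your check via the canonical pairing that $I^*$ is the formal identity spells out the one step the paper only asserts.
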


\begin{proof}
The flip map $x\otimes y\mapsto y\otimes x$ is an isometry for both $\eps$ and $\pi$, which gives $\rho(E,F)=\rho(F,E)$. Let
\[
 I_{E,F}:E\whotimes_{\eps}F\longrightarrow E\whotimes_{\pi}F
\]
be the canonical identity. Its adjoint has the same norm. Under the duality identifications above, $I_{E,F}^*$ is precisely
\[
 I_{E^*,F^*}:E^*\whotimes_{\eps}F^*\longrightarrow E^*\whotimes_{\pi}F^*.
\]
Hence $\rho(E,F)=\|I_{E,F}\|=\|I_{E,F}^*\|=\rho(E^*,F^*)$.
\end{proof}

\begin{lemma}[Dimension bound]\label{lem:dimension}
If $E$ and $F$ are finite-dimensional Banach spaces, then
\[
 \rho(E,F)\le \min\{\dim E,\dim F\}.
\]
\end{lemma}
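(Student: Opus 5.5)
By Lemma~\ref{lem:duality} we have $\rho(E,F)=\rho(F,E)$, so it suffices to prove $\rho(E,F)\le \dim E=:k$. We will use the operator picture: a tensor $z\in E\otimes F$ is the same thing as a finite-rank map $T_z:E^*\to F$, with $\eps(z)=\|T_z\|$. The projective norm $\pi(z)$ is at most the cost of any explicit representation $z=\sum_i x_i\otimes y_i$, measured by $\sum_i\|x_i\|\|y_i\|$. The plan is to build a representation of $z$ with only $k$ terms, adapted to a good basis of $E$, each term costing at most $\eps(z)$.

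The natural choice is an Auerbach basis. By Auerbach's lemma there are $x_1,\dots,x_k\in E$ and $x_1^*,\dots,x_k^*\in E^*$ with $\|x_i\|=\|x_i^*\|=1$ and $x_i^*(x_j)=\delta_{ij}$. Given $z$, expand it in this basis as
\[
 z=\sum_{i=1}^k x_i\otimes y_i,\qquad y_i:=T_z(x_i^*)=(x_i^*\otimes \mathrm{id}_F)(z).
\]
This identity holds because $\sum_i x_i^*(\cdot)\,x_i$ is the identity on $E$, and $z\mapsto (\sum_i x_i^*(\cdot)x_i\otimes\mathrm{id})z$ is then the identity on $E\otimes F$.

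It remains to estimate each term. We have $\|y_i\|=\|T_z(x_i^*)\|\le\|T_z\|\,\|x_i^*\|=\eps(z)$. Therefore
\[
 \pi(z)\le\sum_{i=1}^k\|x_i\|\|y_i\|\le k\,\eps(z),
\]
which gives $\rho(E,F)\le \dim E$. Applying the same bound to $\rho(F,E)$ gives $\rho(E,F)\le\dim F$, and the minimum follows.

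The only real ingredient is the existence of an Auerbach basis in finite dimensions. We can either quote it or prove it by maximizing $|\det|$ over $k$-tuples of unit vectors; this works equally over $\mathbb R$ and $\mathbb C$. We do not expect any step to be a genuine obstacle; the main point of care is checking that the coordinate expansion through the biorthogonal system reproduces $z$ exactly.
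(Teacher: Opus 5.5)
Your proposal is correct and follows essentially the same argument as the paper: expand $z$ in an Auerbach basis of $E$, bound each coefficient by $\|y_i\|\le\eps(z)$ to get $\pi(z)\le(\dim E)\,\eps(z)$, and use the flip symmetry for $\dim F$. The only difference is cosmetic: you invoke the flip via Lemma~\ref{lem:duality} at the start, while the paper applies it at the end.
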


\begin{proof}
Let $N=\dim E$ and choose an Auerbach basis $(x_i,x_i^*)_{i=1}^N$ for $E$. Every $z\in E\otimes F$ has the representation
\[
 z=\sum_{i=1}^N x_i\otimes y_i,
 \qquad
 y_i=(x_i^*\otimes I_F)z.
\]
Since $\|x_i\|=\|x_i^*\|=1$,
\[
 \|y_i\|\le\eps(z)
\]
for every $i$. Hence
\[
 \pi(z)\le\sum_{i=1}^N\|y_i\|\le N\eps(z).
\]
Applying the same argument after flipping the tensor factors gives
$\pi(z)\le(\dim F)\eps(z)$.
\end{proof}

\begin{lemma}\label{lem:isomorphism}
Let $E,E_0,F$ be finite-dimensional Banach spaces and let $S:E\to E_0$ be an isomorphism. Then
\[
 \rho(E,F)\le \|S\|\,\|S^{-1}\|\,\rho(E_0,F).
\]
The same statement holds when the second tensor factor is changed.
\end{lemma}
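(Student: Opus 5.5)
The plan is to transport both norms through the isomorphism $S\otimes I_F$ and compare them with their counterparts on $E_0\otimes F$. The two ingredients are the metric mapping property of the injective and projective norms (both are uniform cross norms) and the definition of $\rho(E_0,F)$ as a supremum of ratios.

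Fix a nonzero $z\in E\otimes F$ and put $w:=(S\otimes I_F)z\in E_0\otimes F$. Since $S^{-1}$ is bounded, $z=(S^{-1}\otimes I_F)w$. The mapping property of $\pi$ then gives
\[
 \pi_{E,F}(z)\le\|S^{-1}\|\,\pi_{E_0,F}(w).
\]
Applying the definition of $\rho(E_0,F)$ to $w$, which is nonzero because $S\otimes I_F$ is injective, yields
\[
 \pi_{E_0,F}(w)\le\rho(E_0,F)\,\eps_{E_0,F}(w).
\]
The mapping property of $\eps$ gives
\[
 \eps_{E_0,F}(w)=\eps_{E_0,F}\bigl((S\otimes I_F)z\bigr)\le\|S\|\,\eps_{E,F}(z).
\]
Chaining these three inequalities gives
\[
 \pi(z)\le\|S\|\,\|S^{-1}\|\,\rho(E_0,F)\,\eps(z),
\]
and taking the supremum over $z$ proves the claim.

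The only thing to justify is the mapping property $\alpha((u\otimes v)z)\le\|u\|\,\|v\|\,\alpha(z)$ for $\alpha\in\{\eps,\pi\}$; this is standard.
\begin{itemize}
\item For $\pi$, apply $u\otimes v$ to any representation $z=\sum x_i\otimes y_i$. This gives a representation of the image whose cost is at most $\|u\|\,\|v\|\sum\|x_i\|\,\|y_i\|$; then take the infimum over representations.
\item For $\eps$, write $\eps(z)=\sup|\langle x^*\otimes y^*,z\rangle|$ over unit functionals. Note that $\langle x^*\otimes y^*,(u\otimes v)z\rangle=\langle u^*x^*\otimes v^*y^*,z\rangle$, and that $\|u^*x^*\|\le\|u\|$.
\end{itemize}

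For the second factor, apply the same argument with $I_E\otimes T$ for an isomorphism $T\colon F\to F_0$. Alternatively, reduce to the first case using the flip symmetry $\rho(E,F)=\rho(F,E)$ from Lemma~\ref{lem:duality}. I expect no real obstacle; the only care needed is the direction of the estimates, using $S^{-1}$ on the projective side and $S$ on the injective side.
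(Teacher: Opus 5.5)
Your proposal is correct and follows essentially the same route as the paper's proof: set $w=(S\otimes I_F)z$, bound $\pi_{E,F}(z)\le\|S^{-1}\|\pi_{E_0,F}(w)$ and $\eps_{E_0,F}(w)\le\|S\|\eps_{E,F}(z)$ by the metric mapping property, then take suprema. Your additional verification of the mapping property for $\eps$ and $\pi$, and your handling of the second factor via the flip symmetry of Lemma~\ref{lem:duality}, are fine supplements to what the paper leaves as standard.
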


\begin{proof}
Fix $z\in E\otimes F$ and put $w=(S\otimes I_F)z$. The metric mapping property gives
\[
 \pi_{E,F}(z)\le \|S^{-1}\|\pi_{E_0,F}(w),
 \qquad
 \eps_{E_0,F}(w)\le \|S\|\eps_{E,F}(z).
\]
Therefore
\[
 \frac{\pi_{E,F}(z)}{\eps_{E,F}(z)}
 \le
 \|S\|\,\|S^{-1}\|
 \frac{\pi_{E_0,F}(w)}{\eps_{E_0,F}(w)}.
\]
Since $S\otimes I_F$ is an isomorphism on the algebraic tensor product, taking suprema proves the claim.
\end{proof}

For $1\le u,v\le\infty$, the canonical identity satisfies
\begin{equation}\label{eq:identity-lp}
 \|I_{u,v}^{(d)}:\ell_u^d\to\ell_v^d\|
 =d^{\max\{0,1/v-1/u\}}.
\end{equation}
We also use
\begin{equation}\label{eq:l1pi}
 \ell_1^d\whotimes_{\pi}X\cong\ell_1^d(X)
\end{equation}
isometrically.

\section{Same-side rectangular distortion}\label{sec:sameside}

We first isolate the endpoint that supplies the dimension-sensitive upper bound below $2$.

\begin{proposition}[The $\ell_1$ endpoint below $2$]\label{prop:l1below}
Let $1\le q\le2$, $n,m\in\N$, and $r=\min\{n,m\}$. Then
\[
 \rho(\ell_1^n,\ell_q^m)\asymp_q\sqrt r.
\]
The constants are independent of $n$ and $m$.
\end{proposition}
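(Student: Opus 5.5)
We identify $z\in\ell_1^n\otimes\ell_q^m$ with an $n\times m$ matrix $A$ with rows $y_1,\dots,y_n\in\ell_q^m$. By \eqref{eq:l1pi},
\[
\pi(z)=\sum_{i=1}^n\|y_i\|_q,
\qquad
\eps(z)=\|A:\ell_{q'}^m\to\ell_1^n\|=\sup_{\|s\|_\infty\le1}\Bigl\|\sum_i s_i y_i\Bigr\|_q .
\]
The second expression for $\eps$ comes from duality, since the norm of $\ell_{q}^{m\,*}\to\ell_1^n$ is attained against sign vectors in $\ell_\infty^n$. The plan is to prove two separate upper bounds, $\rho\lesssim\sqrt n$ and $\rho\lesssim\sqrt m$, and then a lower bound $\gtrsim\sqrt r$. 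For the lower bound we restrict to a coordinate copy of dimension $r$, since $\ell_1^r$ and $\ell_q^r$ are $1$-complemented, so $\rho_{1,q}(n,m)\ge\rho_{1,q}(r,r)$. It then suffices to treat the square case.

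\emph{Upper bound by $\sqrt n$.} We average over random signs $s=(\varepsilon_i)$ and use the Khintchine--Kahane inequality in $\ell_q$, which has cotype $2$ for $q\le2$:
\[
\eps(z)\ge\mathbb E\Bigl\|\sum_i\varepsilon_iy_i\Bigr\|_q\ge c_q\Bigl(\sum_i\|y_i\|_q^2\Bigr)^{1/2}\ge c_q n^{-1/2}\sum_i\|y_i\|_q .
\]
Hence $\pi(z)\le C_q\sqrt n\,\eps(z)$. Concretely, the middle inequality is obtained by applying scalar Khintchine coordinatewise together with Minkowski's inequality in $L_{q/2}$ or $L_q$ (the cotype 2 constant of $\ell_q$).

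\emph{Upper bound by $\sqrt m$.} Here we use columns instead. The approach is to factor through $\ell_2^m$. By Lemma~\ref{lem:isomorphism} with the identity $\ell_q^m\to\ell_2^m$, there is a loss $\|I\|\|I^{-1}\|=m^{1/q-1/2}$, which is too much. So we argue directly instead. We have $\|y_i\|_q\le m^{1/q-1/2}\|y_i\|_2$, and $\sum_i\|y_i\|_2\le\sqrt m\,\|A:\ell_2^m\to\ell_1^n\|$ by Grothendieck's little inequality, or more elementarily by Khintchine over column signs. We also have $\|A:\ell_2^m\to\ell_1^n\|\le m^{1/2-1/q}\|A:\ell_{q'}^m\to\ell_1^n\|$. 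Combining, $\pi(z)\le C\sqrt m\,\eps(z)$.

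The bound $\sum_i\|y_i\|_2\lesssim\sqrt m\,\|A\|_{2\to1}$ is obtained as follows. With random signs $\delta\in\{\pm1\}^m$ we have $\sum_i|\langle y_i,\delta\rangle|\le\|A\|_{2\to1}\|\delta\|_2=\sqrt m\|A\|_{2\to1}$, and taking expectation with Khintchine gives $\sum_i\|y_i\|_2\le\sqrt2\sqrt m\|A\|_{2\to1}$.

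\emph{Lower bound.} This is the main obstacle. For $n=m=d$ we take a random sign matrix $A=(\varepsilon_{ij})$. Then $\pi(z)=d\cdot d^{1/q}$. We need $\eps(z)=\|A\|_{q'\to1}\lesssim d^{1/2+1/q}$, i.e.\ a loss of only $\sqrt d$. The strategy is to bound $\|A\|_{q'\to1}\le\|A\|_{2\to1}\,\|I:\ell_{q'}^d\to\ell_2^d\|$. The second factor equals $d^{1/2-1/q'}=d^{1/q-1/2}$. For the first, $\|A\|_{2\to1}\le\sqrt d\|A\|_{2\to2}\lesssim\sqrt d\cdot\sqrt d=d$ with high probability, by the standard bound $\|A\|_{2\to2}\lesssim\sqrt d$ for random sign matrices. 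Combining gives $\eps\lesssim d^{1/2+1/q}$, so $\rho\gtrsim\sqrt d$. In the rectangular setting the paper's "two simultaneous operator-norm controls" presumably concern both $\|A\|_{2\to2}$ and the $\ell_\infty\to\ell_1$ or $q$-norm for $n\ne m$. The coordinate reduction above avoids needing them here, and the only delicate input is the random-matrix norm bound.
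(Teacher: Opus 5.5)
Your $\sqrt n$ bound is the paper's argument, and your lower bound is correct. The $\sqrt m$ bound, however, contains a false step. The inequality
\[
\|A:\ell_2^m\to\ell_1^n\|\le m^{1/2-1/q}\,\|A:\ell_{q'}^m\to\ell_1^n\|
\]
goes the wrong way. Since $q'\ge2$ we have $\|x\|_{q'}\le\|x\|_2$, so $B_{\ell_2^m}\subseteq B_{\ell_{q'}^m}$, and the valid comparisons are
\[
\|A\|_{2\to1}\le\|A\|_{q'\to1}\le m^{1/q-1/2}\|A\|_{2\to1}.
\]
Your factor $m^{1/2-1/q}\le1$ would strengthen the first of these, and that inequality is sharp. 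For instance, with $n=1$ and $y_1=e_1$ both operator norms equal $1$, so your inequality fails whenever $q<2$ and $m\ge2$.

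With the correct comparison, your chain yields only $\pi(z)\lesssim m^{1/q}\eps(z)$. This is exactly the loss you rejected when you discarded Lemma~\ref{lem:isomorphism}. The gain disappears when you bound $\sum_i|\langle y_i,\delta\rangle|$ by $\|A\|_{2\to1}\|\delta\|_2$. The supremum over the Euclidean ball may be attained at a spiky vector, where the $\ell_2$- and $\ell_{q'}$-norms agree. The sign vectors, by contrast, are flat: for them $\|\delta\|_{q'}=m^{1/q'}$ is smaller than $\|\delta\|_2=m^{1/2}$ by the full factor $m^{1/q-1/2}$.

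The repair uses only what you already have: test $\eps(z)$ directly on the sign vectors, measured in $\ell_{q'}$. Since $m^{-1/q'}\delta\in B_{\ell_{q'}^m}$,
\[
\sum_{i=1}^n|\langle y_i,\delta\rangle|\le m^{1/q'}\eps(z).
\]
Khintchine then gives $\sum_i\|y_i\|_2\lesssim m^{1/q'}\eps(z)$, and hence $\pi(z)\lesssim m^{1/q-1/2}m^{1/q'}\eps(z)=\sqrt m\,\eps(z)$. This is precisely the paper's proof of \eqref{eq:l1below-m}.

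Your lower bound takes a genuinely different route from the paper. Both arguments reduce to the square case by $1$-complementation. The paper then cites the square estimate of Bonet, Defant, Peris and Ramanujan. You instead verify $\rho_{1,q}(d,d)\gtrsim\sqrt d$ directly with a random sign matrix, using $\pi(z)=d^{1+1/q}$ and $\eps(z)\le d^{1/q-1/2}\sqrt d\,\|A\|_{2\to2}\lesssim d^{1/2+1/q}$. That computation is correct. It makes the proposition self-contained apart from the standard spectral bound $\|A\|_{2\to2}\lesssim\sqrt d$, and it is the square, $p=1$ case of the mechanism behind Lemma~\ref{lem:lower}.
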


\begin{proof}
Write
\[
 z=\sum_{i=1}^n e_i\otimes y_i\in\ell_1^n\otimes\ell_q^m.
\]
By \eqref{eq:l1pi},
\[
 \pi(z)=\sum_{i=1}^n\|y_i\|_q,
 \qquad
 \eps(z)=\sup_{\varphi\in B_{\ell_{q'}^m}}
 \sum_{i=1}^n|\varphi(y_i)|.
\]
Equivalently,
\begin{equation}\label{eq:finite-1summing}
 \rho(\ell_1^n,X)=\pi_1^{(n)}(I_X),
\end{equation}
where $\pi_1^{(n)}$ denotes the $1$-summing norm restricted to sequences of length $n$; compare \cite[Proposition~13]{AubrunEtAl2020}.

We first obtain the bound in terms of $n$. By Cauchy--Schwarz,
\[
 \pi(z)\le \sqrt n\left(\sum_{i=1}^n\|y_i\|_q^2\right)^{1/2}.
\]
Since $1\le q\le2$, the mixed-norm inequality and the scalar Khintchine inequality give
\begin{align*}
 \left(\sum_{i=1}^n\|y_i\|_q^2\right)^{1/2}
 &\le
 \left(\sum_{j=1}^m
       \left(\sum_{i=1}^n|y_i(j)|^2\right)^{q/2}
 \right)^{1/q}\\
 &\lesssim_q
 \left(\mathbb E\left\|\sum_{i=1}^n\varepsilon_i y_i\right\|_q^q\right)^{1/q}
 \le \eps(z),
\end{align*}
where $(\varepsilon_i)$ are independent Rademacher variables. Hence
\begin{equation}\label{eq:l1below-n}
 \rho(\ell_1^n,\ell_q^m)\lesssim_q\sqrt n.
\end{equation}

For the bound in terms of $m$, fix independent Rademacher variables
$(\delta_j)_{j=1}^m$. By the scalar Khintchine inequality,
\[
 \sum_{i=1}^n\|y_i\|_2
 \lesssim
 \mathbb E_\delta
 \sum_{i=1}^n
 \left|\sum_{j=1}^m\delta_j y_i(j)\right|.
\]
For every choice of signs, the functional
\[
 \varphi_\delta
 =m^{-1/q'}(\delta_1,\ldots,\delta_m)
 \]
belongs to $B_{\ell_{q'}^m}$. Hence, by the definition of the injective norm,
\[
 \sum_{i=1}^n
 \left|\sum_{j=1}^m\delta_j y_i(j)\right|
 =m^{1/q'}\sum_{i=1}^n|\varphi_\delta(y_i)|
 \le m^{1/q'}\eps(z).
\]
Since $1\le q\le2$,
\[
 \|y_i\|_q\le m^{1/q-1/2}\|y_i\|_2,
\]
and therefore
\[
 \pi(z)=\sum_{i=1}^n\|y_i\|_q
 \lesssim_q
 m^{1/q-1/2+1/q'}\eps(z)
 =\sqrt m\,\eps(z).
\]
Thus
\begin{equation}\label{eq:l1below-m}
 \rho(\ell_1^n,\ell_q^m)\lesssim_q\sqrt m.
\end{equation}
Combining \eqref{eq:l1below-n} and \eqref{eq:l1below-m} gives the required upper bound $\lesssim_q\sqrt r$.

For the reverse estimate, the first $r$ coordinates are $1$-complemented in both factors, so
\[
 \rho(\ell_1^n,\ell_q^m)\ge \rho(\ell_1^r,\ell_q^r).
\]
The square estimate of Bonet, Defant, Peris and Ramanujan \cite{BonetDefantPerisRamanujan1994} has exponent $1/2$ at $(1,q)$ for $1\le q\le2$. Hence
\[
 \rho(\ell_1^r,\ell_q^r)\asymp_q\sqrt r,
\]
and the proof is complete.
\end{proof}

For the lower estimate in the general same-side problem, we use one
sign matrix with two simultaneous norm controls. We fix the following
convention. If
\[
A=(a_{ji})_{1\le j\le m,\ 1\le i\le n},
\]
then
\[
z_A:=\sum_{i=1}^n\sum_{j=1}^m a_{ji}e_i\otimes e_j
\in\ell_p^n\otimes\ell_q^m.
\]
Under the injective tensor/operator identification, \(z_A\) corresponds
to
\[
A:\ell_{p'}^n\longrightarrow\ell_q^m,
\qquad
(Ax)_j=\sum_{i=1}^n a_{ji}x_i.
\]
The same coefficients define the bilinear form
\[
B_A:\ell_p^n\times\ell_{q'}^m\longrightarrow\K,
\qquad
B_A(x,y)=\sum_{i=1}^n\sum_{j=1}^m a_{ji}x_i y_j.
\]
Thus
\[
\eps(z_A)=\|A:\ell_{p'}^n\to\ell_q^m\|.
\]
When \(a_{ji}\in\{-1,1\}\), one also has
\[
\langle B_A,z_A\rangle=nm.
\]

\begin{lemma}\label{lem:randommatrix}
Let $1\le p,q\le2$ and $n,m\in\N$. There is a sign matrix
$A=(\sigma_{ji})_{1\le j\le m,\,1\le i\le n}$ such that
\begin{align}
\|A:\ell_{p'}^n\to\ell_q^m\|
&\lesssim_{p,q}
n^{1/p-1/2}m^{1/q-1/2}(\sqrt n+\sqrt m),
\label{eq:rand-eps}\\
\|A:\ell_p^n\to\ell_{q'}^m\|
&\lesssim_{p,q}
n^{1-1/p}+m^{1-1/q}.
\label{eq:rand-bilin}
\end{align}
\end{lemma}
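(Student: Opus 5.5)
Take $A$ to be a uniformly random $m\times n$ sign matrix. We show that each of \eqref{eq:rand-eps} and \eqref{eq:rand-bilin} holds with probability greater than $1/2$, up to constants depending only on $p,q$. A union bound then produces a single realization satisfying both. Both estimates come from the classical bound
\[
 \mathbb E\|A:\ell_2^n\to\ell_2^m\|\lesssim \sqrt n+\sqrt m
\]
for Rademacher matrices (Gordon/Chevet, or Seginer), combined with the factorization of $A$ through Hilbert space via the identities \eqref{eq:identity-lp}. Markov's inequality with a suitable constant $C$ gives
\[
 \|A:\ell_2^n\to\ell_2^m\|\le C(\sqrt n+\sqrt m)
\]
with probability at least $3/4$.

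\textbf{Estimate \eqref{eq:rand-eps}.} Since $p\le2$, we have $p'\ge2$, and we factor
\[
 A:\ell_{p'}^n\xrightarrow{I}\ell_2^n\xrightarrow{A}\ell_2^m\xrightarrow{I}\ell_q^m.
\]
By \eqref{eq:identity-lp} the first identity has norm $n^{1/2-1/p'}=n^{1/p-1/2}$. Since $q\le2$, the last identity has norm $m^{1/q-1/2}$. Multiplying these by the Hilbertian bound gives \eqref{eq:rand-eps} on the same event of probability at least $3/4$.

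\textbf{Estimate \eqref{eq:rand-bilin}.} Here the naive factorization through $\ell_2$ loses too much. It would give $n^{1/2-1/p}\cdot m^{1/2-1/q'}(\sqrt n+\sqrt m)$; one exponent is nonpositive and the other nonnegative, and the result does not match $n^{1/p'}+m^{1/q'}$. The most efficient route is Chevet's inequality for the Gaussian matrix $G$. It gives
\[
 \mathbb E\|G:\ell_p^n\to\ell_{q'}^m\|\le \|\mathrm{id}:\ell_p^n\to\ell_2^n\|\,\mathbb E\|g\|_{q'}+\|\mathrm{id}:\ell_{q}^m\to\ell_2^m\|\,\mathbb E\|g\|_{p'},
\]
where the identity norms are the weak-$\ell_2$ norms of the unit balls, namely $\sup_{\|x\|_p\le1}\|x\|_2=1$ for $p\le2$ and likewise $1$ for $q\le 2$. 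Together with $\mathbb E\|g\|_{q'}\lesssim_q m^{1/q'}$ and $\mathbb E\|g\|_{p'}\lesssim_p n^{1/p'}$, this yields $n^{1-1/p}+m^{1-1/q}$. The endpoints $p=1$ or $q=1$ give $\sqrt{\log}$ factors in $\mathbb E\|g\|_\infty$, and these must be avoided.

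The main obstacle is therefore transferring Chevet from Gaussian to Rademacher entries without losing logarithms. The first attempt will use the Rademacher version of Chevet's inequality, available via the contraction principle and Talagrand's comparison for Bernoulli processes, or simply the deterministic bound at endpoints. For example, if $q=1$, then
\[
 \|A:\ell_p^n\to\ell_\infty^m\|=\max_j\|A_j\|_{p'}=n^{1/p'}\le n^{1/p'}+m^{0},
\]
which is trivially fine, and $p=1$ is similar. For $1<p,q\le2$ the exponents $p',q'$ are finite. Then $\mathbb E\|\varepsilon\|_{q'}\asymp m^{1/q'}$ exactly, and the Bernoulli Chevet bound (the sign process is subgaussian, so its supremum is controlled via a $\gamma_2$ comparison) gives the estimate with constants depending on $p,q$. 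Markov's inequality again gives probability at least $3/4$, and intersecting the two events yields the required sign matrix.
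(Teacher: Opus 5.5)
Your argument is correct. Its probabilistic skeleton matches the paper's: a uniformly random sign matrix, expectation bounds for the two operator norms, Markov's inequality, and a common realization. The difference is where the expectation bounds come from. The paper simply cites the known estimates for $\mathbb E\|A:\ell_u^n\to\ell_v^m\|$ for Rademacher matrices (Lata{\l}a--Strzelecka), applied once with $(u,v)=(p',q)$ and once with $(u,v)=(p,q')$. You derive them yourself:
\begin{itemize}
\item \eqref{eq:rand-eps} from the Hilbertian bound $\mathbb E\|A:\ell_2^n\to\ell_2^m\|\lesssim\sqrt n+\sqrt m$, by factoring through $\ell_2$;
\item \eqref{eq:rand-bilin} from Chevet's inequality for a Gaussian matrix, transferred to signs, when $1<p,q\le2$;
\item the endpoints exactly: at $p=1$ the norm is the maximal column norm $m^{1/q'}$, and at $q=1$ it is the maximal row norm $n^{1/p'}$.
\end{itemize}
The citation is shorter and treats all exponents uniformly. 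Your route is self-contained and uses only classical tools. It also shows why no logarithm appears: the Gaussian $\sqrt{\log}$ loss arises only when $p'$ or $q'$ is infinite, which is exactly where the norm of a sign matrix is deterministic.

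Some remarks on details.
\begin{itemize}
\item The ``main obstacle'' is not one. The transfer you need goes in the easy direction. Condition on the signs of the Gaussians and apply Jensen to their absolute values; this gives $\mathbb E\sup_t\sum_k\varepsilon_kt_k\le\sqrt{\pi/2}\,\mathbb E\sup_t\sum_kg_kt_k$ for every index set. So neither Talagrand's comparison nor a $\gamma_2$ argument is needed.
\item Phrase that step as domination by the Gaussian bound, not as a Bernoulli Chevet inequality with $\mathbb E\|\varepsilon\|$ on the right. That inequality fails for general index sets. Since $p',q'<\infty$ here, the Gaussian and Rademacher quantities are comparable, so nothing is lost.
\item Your aside on the naive factorization uses wrong identity norms. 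For $p\le2\le q'$, both $\ell_p^n\to\ell_2^n$ and $\ell_2^m\to\ell_{q'}^m$ have norm $1$, so that route gives $\sqrt n+\sqrt m$. This is still too weak, so your conclusion stands.
\item The lemma is stated over $\K$. As the paper notes, over $\mathbb C$ a real matrix acting between $\ell_u$-spaces has norm at most twice its real norm, so your bounds carry over.
\end{itemize}
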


\begin{proof}
Let \(A=(\sigma_{ji})\) be an \(m\times n\) random matrix with
independent Rademacher entries. We apply
\cite[Equation~(2)]{LatalaStrzeleckaJFA2025} twice.

First, take the domain exponent to be \(p'\) and the target exponent to
be \(q\). Since \(p'\ge2\) and \(q\le2\), the corresponding estimate
gives
\[
\mathbb E\|A:\ell_{p'}^n\to\ell_q^m\|
\lesssim_{p,q}
n^{1/p-1/2}m^{1/q-1/2}(\sqrt n+\sqrt m).
\]
Second, take the domain exponent to be \(p\) and the target exponent to
be \(q'\). Since \(p\le2\) and \(q'\ge2\), one obtains
\[
\mathbb E\|A:\ell_p^n\to\ell_{q'}^m\|
\lesssim_{p,q}
n^{1-1/p}+m^{1-1/q}.
\]

For complex scalars, the same real sign matrix acts complex-linearly and changes the bounds by at most an absolute factor. Markov's inequality then provides one realization for which both estimates hold simultaneously.
\end{proof}

For $1\le p,q\le2$, put
\[
 \beta_p:=1-\frac1p,
 \qquad
 \beta_q:=1-\frac1q.
\]

\begin{lemma}\label{lem:lower}
Let $1\le p,q\le2$. Then
\[
 \rho_{p,q}(n,m)
 \gtrsim_{p,q}
 \sqrt{\min\{n,m\}}\,
 \min\{n^{\beta_p},m^{\beta_q}\}.
\]
\end{lemma}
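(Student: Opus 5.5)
Take the sign matrix $A$ of Lemma~\ref{lem:randommatrix} and test $z_A$ against the bilinear form $B_A$. By the duality $(E\whotimes_{\eps}F)^*\cong E^*\whotimes_{\pi}F^*$, equivalently $(E\whotimes_\pi F)^*\cong E^*\whotimes_\eps F^*$, we have
\[
 \pi(z_A)\ \ge\ \frac{|\langle B_A,z_A\rangle|}{\|B_A\|},
\]
where $\|B_A\|$ is the norm of $B_A$ as a bilinear form on $\ell_p^n\times\ell_{q'}^m$. This norm equals $\|A:\ell_p^n\to\ell_q^m\|$ (operator into the dual of $\ell_{q'}^m$). Here one has to be careful with the exponents. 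The bilinear form norm is $\sup|\sum a_{ji}x_iy_j|$ over $x\in B_{\ell_p^n}$ and $y\in B_{\ell_{q'}^m}$, i.e. $\|A:\ell_p^n\to\ell_q^m\|$. This is not the quantity in \eqref{eq:rand-bilin}. Instead, I would pair $z_A\in\ell_p^n\otimes\ell_q^m$ with the dual space $\ell_{p'}^n\whotimes_\eps\ell_{q'}^m$, whose norm of $A$ is $\|A:\ell_p^n\to\ell_{q'}^m\|$, exactly \eqref{eq:rand-bilin}. So the plan is
\[
 \pi(z_A)\ge \frac{nm}{\|A:\ell_p^n\to\ell_{q'}^m\|},\qquad \eps(z_A)=\|A:\ell_{p'}^n\to\ell_q^m\|,
\]
using $\langle B_A,z_A\rangle=\sum\sigma_{ji}^2=nm$.

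Inserting the bounds of Lemma~\ref{lem:randommatrix} gives
\[
 \rho_{p,q}(n,m)\ge\frac{\pi(z_A)}{\eps(z_A)}\gtrsim_{p,q}
 \frac{nm}{n^{1/p-1/2}m^{1/q-1/2}(\sqrt n+\sqrt m)\,(n^{\beta_p}+m^{\beta_q})}.
\]
Since $nm/(n^{1/p-1/2}m^{1/q-1/2})=n^{1/2+\beta_p}m^{1/2+\beta_q}$, the right side simplifies. Use $\sqrt n+\sqrt m\asymp\max\{\sqrt n,\sqrt m\}$, $\sqrt{nm}/\max\{\sqrt n,\sqrt m\}=\sqrt{r}$, and $n^{\beta_p}m^{\beta_q}/\max\{n^{\beta_p},m^{\beta_q}\}=\min\{n^{\beta_p},m^{\beta_q}\}$. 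The result is exactly
\[
 \rho_{p,q}(n,m)\gtrsim_{p,q}\sqrt r\,\min\{n^{\beta_p},m^{\beta_q}\}.
\]

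The only delicate point is the bookkeeping of which operator norm realises the dual (injective) norm of $B_A$ on $\ell_{p'}^n\otimes\ell_{q'}^m$. The paper's definition of $B_A$ on $\ell_p^n\times\ell_{q'}^m$ should be read as the element of $\ell_{p'}^n\whotimes_\eps\ell_{q'}^m$, i.e. as a form on $\ell_p^n\times\ell_q^m$, whose norm is $\|A:\ell_p^n\to\ell_{q'}^m\|$. With that identification, the argument reduces to the random-matrix estimates already established in Lemma~\ref{lem:randommatrix}. The simultaneous realisation of both estimates for one matrix, which is the genuine difficulty, has already been secured there via Markov's inequality.
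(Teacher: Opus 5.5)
Your proof is correct and follows the paper's argument. You test $z_A$ against the sign form, use $\langle B_A,z_A\rangle=nm$ together with \eqref{eq:rand-eps} and \eqref{eq:rand-bilin}, and simplify. Your symmetric simplification via $\sqrt{nm}/(\sqrt n+\sqrt m)\asymp\sqrt r$ and $uv/(u+v)\asymp\min\{u,v\}$ replaces the paper's split into the cases $n\le m$ and $m\le n$.

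You are also right about the domain of the form. To pair with $z_A\in\ell_p^n\otimes\ell_q^m$ it must live on $\ell_p^n\times\ell_q^m$, where its norm is $\|A:\ell_p^n\to\ell_{q'}^m\|$. So the domain $\ell_p^n\times\ell_{q'}^m$ written in the paper's convention is a slip, and the estimate the paper actually uses is the one you identified.
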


\begin{proof}
Choose the sign matrix $A=(\sigma_{ji})$ from Lemma~\ref{lem:randommatrix} and set
\[
 z_A=\sum_{i=1}^n\sum_{j=1}^m\sigma_{ji}e_i\otimes e_j.
\]
The injective tensor/operator identification and
\eqref{eq:rand-eps} give
\[
 \eps(z_A)\lesssim_{p,q}
 n^{1/p-1/2}m^{1/q-1/2}(\sqrt n+\sqrt m).
\]
The same matrix defines the bilinear form
\[
 B_A(x,y)=\sum_{i=1}^n\sum_{j=1}^m\sigma_{ji}x_i y_j,
\]
and \eqref{eq:rand-bilin} gives
\[
 \|B_A\|\lesssim_{p,q}n^{\beta_p}+m^{\beta_q}.
\]
Since $\langle B_A,z_A\rangle=nm$, projective duality yields
\[
 \pi(z_A)\gtrsim_{p,q}\frac{nm}{n^{\beta_p}+m^{\beta_q}}.
\]
Therefore
\[
\rho_{p,q}(n,m)
\gtrsim_{p,q}
\frac{
n^{3/2-1/p}m^{3/2-1/q}
}{
(\sqrt n+\sqrt m)
\bigl(n^{1-1/p}+m^{1-1/q}\bigr)
}.
\]
Put
\[
\beta_p=1-\frac1p,
\qquad
\beta_q=1-\frac1q.
\]
We simplify the last expression by considering the two possible orders of
\(n\) and \(m\).

Assume first that \(n\le m\). Then
\[
\sqrt n+\sqrt m\asymp\sqrt m,
\]
and hence
\[
\rho_{p,q}(n,m)
\gtrsim_{p,q}
\frac{
n^{1/2+\beta_p}m^{\beta_q}
}{
n^{\beta_p}+m^{\beta_q}
}.
\]
Since
\[
\frac{uv}{u+v}\asymp\min\{u,v\}
\qquad (u,v>0),
\]
we obtain
\[
\rho_{p,q}(n,m)
\gtrsim_{p,q}
\sqrt n\,
\min\{n^{\beta_p},m^{\beta_q}\}.
\]
Since \(r=n\) in this case, this is the required estimate.

If \(m\le n\), the same argument, with the two tensor factors
interchanged, gives
\[
\rho_{p,q}(n,m)
\gtrsim_{p,q}
\sqrt m\,
\min\{n^{\beta_p},m^{\beta_q}\}.
\]
Combining the two cases proves
\[
\rho_{p,q}(n,m)
\gtrsim_{p,q}
\sqrt r\,
\min\{n^{\beta_p},m^{\beta_q}\}.
\]
\end{proof}

\begin{proof}[Proof of Theorem~\ref{thm:A}]
Assume first $1\le p,q\le2$ and write $r=\min\{n,m\}$. Lemma~\ref{lem:lower} gives
\[
 \rho_{p,q}(n,m)
 \gtrsim_{p,q}
 \sqrt r\min\{n^{\beta_p},m^{\beta_q}\}.
\]
For the reverse estimate, Lemma~\ref{lem:isomorphism} and Proposition~\ref{prop:l1below} give
\[
 \rho_{p,q}(n,m)
 \le n^{1-1/p}\rho(\ell_1^n,\ell_q^m)
 \lesssim_{p,q}n^{\beta_p}\sqrt r.
\]
Interchanging the two tensor factors gives
\[
 \rho_{p,q}(n,m)\lesssim_{p,q}m^{\beta_q}\sqrt r.
\]
Thus
\[
 \rho_{p,q}(n,m)
 \asymp_{p,q}
 \sqrt r\min\{n^{\beta_p},m^{\beta_q}\}.
\]
Since $\eta(t)=1-1/t$ for $1\le t\le2$, this is the asserted formula.

If $2\le p,q\le\infty$, Lemma~\ref{lem:duality} gives
\[
 \rho_{p,q}(n,m)=\rho_{p',q'}(n,m).
\]
Now $1\le p',q'\le2$ and $\eta(p')=\eta(p)$, $\eta(q')=\eta(q)$, so the first part applies.

Finally, when $p=q=2$, tensors identify with operators of rank at most $r$. Their injective and projective norms are the operator and nuclear norms, respectively, so
\[
 \pi(z)\le \operatorname{rank}(z)\eps(z)\le r\eps(z).
\]
A partial identity of rank $r$ attains equality. Hence $\rho_{2,2}(n,m)=r$.
\end{proof}

\section{The mixed rectangular range}\label{sec:mixed}

We begin with a rectangular endpoint estimate. This endpoint drives the sharp-order region in Theorem~\ref{thm:B}.

\begin{proposition}[The $\ell_1$ endpoint]\label{prop:l1q}
Let $2\le q\le\infty$, $n,m\in\N$, and $r=\min\{n,m\}$. Then
\[
 \rho(\ell_1^n,\ell_q^m)\asymp_q r^{1/q'}.
\]
If $n\le m$, then the equality is exact:
\[
 \rho(\ell_1^n,\ell_q^m)=n^{1/q'}.
\]
For $q=\infty$ one has the exact identity
\[
 \rho(\ell_1^n,\ell_\infty^m)=\min\{n,m\}.
\]
\end{proposition}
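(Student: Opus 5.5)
The plan is to work throughout with the identification \eqref{eq:finite-1summing}. For $z=\sum_{i=1}^n e_i\otimes y_i$ with $y_i\in\ell_q^m$ we have $\pi(z)=\sum_i\|y_i\|_q$, and
\[
 \eps(z)=\sup_{\varphi\in B_{\ell_{q'}^m}}\sum_i|\varphi(y_i)|\ \ge\ \Bigl(\mathbb E\Bigl\|\sum_i\varepsilon_iy_i\Bigr\|_q^q\Bigr)^{1/q},
\]
since $\|\sum_i\varepsilon_iy_i\|_q\le\eps(z)$ for every choice of signs. I will prove three things: the exact upper bound $n^{1/q'}$, an upper bound of order $m^{1/q'}$, and matching lower bounds from the diagonal tensor.

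\emph{Upper bound in $n$ (exact constant).} By H\"older,
\[
 \pi(z)\le n^{1/q'}\Bigl(\sum_i\|y_i\|_q^q\Bigr)^{1/q}.
\]
Since $q\ge2$, the space $\ell_q$ has cotype $q$ with constant $1$. To see this coordinatewise, fix $j$. Then
\[
 \Bigl(\sum_i|y_i(j)|^q\Bigr)^{1/q}\le\Bigl(\sum_i|y_i(j)|^2\Bigr)^{1/2}=\Bigl(\mathbb E\Bigl|\sum_i\varepsilon_iy_i(j)\Bigr|^2\Bigr)^{1/2}\le\Bigl(\mathbb E\Bigl|\sum_i\varepsilon_iy_i(j)\Bigr|^q\Bigr)^{1/q}.
\]
The middle equality holds for real or complex scalars, and no Khintchine constant enters. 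Raising to the power $q$ and summing over $j$ gives $\sum_i\|y_i\|_q^q\le\mathbb E\|\sum_i\varepsilon_iy_i\|_q^q\le\eps(z)^q$. Hence $\rho(\ell_1^n,\ell_q^m)\le n^{1/q'}$ with constant exactly $1$. This step carries the exactness claim, so it is the one I expect to need the most care: every comparison constant must be exactly $1$.

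\emph{Upper bound in $m$.} Apply Lemma~\ref{lem:isomorphism} in the second factor with the identity $S:\ell_q^m\to\ell_2^m$. By \eqref{eq:identity-lp}, $\|S\|\|S^{-1}\|=m^{1/2-1/q}$. Combined with Proposition~\ref{prop:l1below} at $q=2$, this gives
\[
 \rho(\ell_1^n,\ell_q^m)\lesssim m^{1/2-1/q}\sqrt{\min\{n,m\}}\le m^{1/q'}.
\]
Together with the previous step, $\rho(\ell_1^n,\ell_q^m)\lesssim_q r^{1/q'}$.

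\emph{Lower bounds.} If $n\le m$, take the diagonal tensor $z=\sum_{i=1}^ne_i\otimes e_i$. Then $\pi(z)=n$, and $\eps(z)=\sup_{\varphi\in B_{\ell_{q'}^m}}\sum_{i\le n}|\varphi_i|=n^{1/q}$ by H\"older, with equality attained at $\varphi=n^{-1/q'}(1,\dots,1,0,\dots)$. So the ratio is $n^{1/q'}$, and this gives the exact identity $\rho(\ell_1^n,\ell_q^m)=n^{1/q'}$. If $n>m$, the first $m$ coordinates are $1$-complemented in both factors, as in the proof of Proposition~\ref{prop:l1below}, so $\rho(\ell_1^n,\ell_q^m)\ge\rho(\ell_1^m,\ell_q^m)=m^{1/q'}$. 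This yields $\rho(\ell_1^n,\ell_q^m)\asymp_q r^{1/q'}$.

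\emph{The case $q=\infty$.} Lemma~\ref{lem:dimension} gives $\rho(\ell_1^n,\ell_\infty^m)\le\min\{n,m\}$. The diagonal computation above, with $q'=1$, gives the value $n$ when $n\le m$; complementation then gives $m$ when $n>m$. Together these give the exact identity $\rho(\ell_1^n,\ell_\infty^m)=\min\{n,m\}$.
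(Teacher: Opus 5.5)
Your proof is correct and follows the paper's argument: the same coordinatewise cotype-$q$ estimate combined with H\"older gives the exact bound $n^{1/q'}$, the diagonal tensor gives the lower bounds, and Lemma~\ref{lem:dimension} handles $q=\infty$. The only difference is in the $m^{1/q'}$ bound. You obtain it by comparing $\ell_q^m$ with $\ell_2^m$ via Lemma~\ref{lem:isomorphism} and then using the $q=2$ case of Proposition~\ref{prop:l1below}, whereas the paper applies the random-sign functional $m^{-1/q'}(\varepsilon_1,\dots,\varepsilon_m)$ and Khintchine directly; underneath, this is the same computation.
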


\begin{proof}
Write
\[
 z=\sum_{i=1}^n e_i\otimes y_i\in\ell_1^n\otimes\ell_q^m.
\]
By \eqref{eq:l1pi},
\[
 \pi(z)=\sum_{i=1}^n\|y_i\|_q,
 \qquad
 \eps(z)=\sup_{\varphi\in B_{\ell_{q'}^m}}
 \sum_{i=1}^n|\varphi(y_i)|.
\]
Assume first $q<\infty$. If $(\eta_i)_{i=1}^n$ are independent Rademacher variables, then
\[
 \sum_{i=1}^n\|y_i\|_q^q
 \le \mathbb E\left\|\sum_{i=1}^n\eta_i y_i\right\|_q^q
 \le \eps(z)^q.
\]
Indeed, for each coordinate the $L_q$-norm of the Rademacher sum dominates its $L_2$-norm, and $q\ge2$. Hence
\[
 \pi(z)
 \le n^{1/q'}\left(\sum_{i=1}^n\|y_i\|_q^q\right)^{1/q}
 \le n^{1/q'}\eps(z).
\]
This already proves the sharp upper estimate when $n\le m$.

For the estimate in terms of $m$, let $(\varepsilon_j)_{j=1}^m$ be independent Rademacher variables and set
\[
 \varphi_\varepsilon=m^{-1/q'}(\varepsilon_1,\ldots,\varepsilon_m)\in B_{\ell_{q'}^m}.
\]
The scalar Khintchine inequality gives a constant $c>0$, independent of the dimensions, such that
\begin{align*}
 \eps(z)
 &\ge \mathbb E\sum_{i=1}^n|\varphi_\varepsilon(y_i)|\\
 &\ge c\,m^{-1/q'}\sum_{i=1}^n\|y_i\|_2
 \ge c\,m^{-1/q'}\sum_{i=1}^n\|y_i\|_q.
\end{align*}
Thus $\pi(z)\lesssim_q m^{1/q'}\eps(z)$, and therefore
\[
 \rho(\ell_1^n,\ell_q^m)\lesssim_q r^{1/q'}.
\]

If $q=\infty$, Lemma~\ref{lem:dimension} gives directly
\[
 \rho(\ell_1^n,\ell_\infty^m)\le r.
\]

For every $2\le q\le\infty$, take the diagonal tensor supported on the first $r$ coordinates,
\[
 z_r=\sum_{i=1}^r e_i\otimes e_i.
\]
Then $\pi(z_r)=r$ and $\eps(z_r)=r^{1/q}$, so
\[
 \frac{\pi(z_r)}{\eps(z_r)}=r^{1/q'}.
\]
This proves the lower bound, the sharp-order statement for $n\le m$, and the exact $q=\infty$ formula.
\end{proof}

\begin{corollary}\label{cor:pinfty}
Let $1\le p\le2$ and $n,m\in\N$. With $r=\min\{n,m\}$,
\[
 \rho(\ell_p^n,\ell_\infty^m)\asymp_p r^{1/p}.
\]
If $m\le n$, then
\[
 \rho(\ell_p^n,\ell_\infty^m)=m^{1/p}.
\]
\end{corollary}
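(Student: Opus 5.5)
We deduce the corollary from Proposition~\ref{prop:l1q} by duality and symmetry, using Lemma~\ref{lem:duality}. For $1\le p\le 2$ we have $p'\ge2$, and
\[
 \rho(\ell_p^n,\ell_\infty^m)=\rho(\ell_{p'}^n,\ell_1^m)=\rho(\ell_1^m,\ell_{p'}^n).
\]
The first equality passes to the dual spaces, since $(\ell_p^n)^*=\ell_{p'}^n$ and $(\ell_\infty^m)^*=\ell_1^m$ isometrically. The second equality is the flip of the tensor factors.

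We then apply Proposition~\ref{prop:l1q} with $q=p'\in[2,\infty]$, with the roles of the dimensions interchanged: the $\ell_1$ factor now has dimension $m$ and the $\ell_q$ factor has dimension $n$. Since $\min\{m,n\}=r$ and $1/q'=1-1/p'=1/p$, the proposition yields
\[
 \rho(\ell_p^n,\ell_\infty^m)\asymp_p r^{1/p}.
\]
The exact clause of the proposition requires the $\ell_1$-dimension to be at most the $\ell_q$-dimension, here $m\le n$. In that case it gives exactly
\[
 \rho(\ell_1^m,\ell_{p'}^n)=m^{1/(p')'}=m^{1/p}.
\]

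The endpoint $p=1$ is consistent with this. There $p'=\infty$, and the exact $q=\infty$ formula gives $\min\{n,m\}=r=r^{1/p}$, so nothing separate is needed.

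The only point requiring care is checking that Lemma~\ref{lem:duality} applies with isometric identifications in finite dimensions, including the endpoint $p=1$ where $(\ell_1^n)^*=\ell_\infty^n$. It also requires matching the condition $m\le n$ with the hypothesis $n\le m$ of the proposition after the flip. Both are immediate, so no real obstacle is expected.
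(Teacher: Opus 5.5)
Your proposal is correct and is essentially the paper's own argument. Lemma~\ref{lem:duality} and the flip give $\rho(\ell_p^n,\ell_\infty^m)=\rho(\ell_1^m,\ell_{p'}^n)$. Proposition~\ref{prop:l1q} with $q=p'$ (so $1/q'=1/p$) and the dimensions interchanged then yields both the two-sided estimate $\asymp_p r^{1/p}$ and the exact value $m^{1/p}$ when $m\le n$.
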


\begin{proof}
By duality and the flip symmetry,
\[
 \rho(\ell_p^n,\ell_\infty^m)
 =\rho(\ell_1^m,\ell_{p'}^n).
\]
Apply Proposition~\ref{prop:l1q} with $q=p'$.
\end{proof}

We next consider the full mixed range.

\begin{proposition}\label{prop:mixedrect}
Let $1\le p\le2\le q\le\infty$, put
\[
 a=\frac1p,\qquad b=\frac1q,
 \qquad r=\min\{n,m\},
 \qquad \gamma_{\rm mix}=\min\{a+b,2-a-b\}.
\]
Then
\[
 \max\!\left\{
 r^{\gamma_{\rm mix}},\,
 m^{b-1/2}r^{1/2}\min\{n^{1-a},m^{1/2}\}
 \right\}
 \lesssim_{p,q}\rho_{p,q}(n,m)
 \lesssim_{p,q}
 \min\left\{n^{1-a}r^{1-b},\,m^b r^a,\,r\right\}.
\]
\end{proposition}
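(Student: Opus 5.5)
The bound splits into three upper estimates and two lower estimates; I would prove each separately, reusing the endpoint results and the isomorphism lemma.

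\textbf{Upper bounds.} The bound $r$ is Lemma~\ref{lem:dimension}. For $n^{1-a}r^{1-b}$, apply Lemma~\ref{lem:isomorphism} with the identity $\ell_p^n\to\ell_1^n$, whose distortion is $\|I_{p,1}\|\,\|I_{1,p}\|=n^{1-a}$ by \eqref{eq:identity-lp}. This gives $\rho_{p,q}(n,m)\le n^{1-a}\rho(\ell_1^n,\ell_q^m)\lesssim_q n^{1-a}r^{1/q'}$ by Proposition~\ref{prop:l1q}, and $1/q'=1-b$. For $m^br^a$, change the second factor $\ell_q^m\to\ell_\infty^m$ instead; the distortion is $m^{b}$. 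Then $\rho_{p,q}(n,m)\le m^b\rho(\ell_p^n,\ell_\infty^m)\lesssim_p m^b r^{a}$ by Corollary~\ref{cor:pinfty}.

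\textbf{Lower bound $r^{\gamma_{\rm mix}}$.} Restrict to the first $r$ coordinates in both factors. These are $1$-complemented, so $\rho_{p,q}(n,m)\ge\rho_{p,q}(r,r)$. Then test two explicit tensors. The diagonal $z=\sum_{i\le r}e_i\otimes e_i$ has $\eps(z)=\|I:\ell_{p'}^r\to\ell_q^r\|=1$ since $q\ge p'$ or $q\le p'$ appropriately; in general $\eps=r^{\max\{0,b-(1-a)\}}$. Its projective norm is at least $r$ by duality against the identity bilinear form on $\ell_p\times\ell_{q'}$, which has norm $r^{\max\{0,1-a-(1-b)\}}$. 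Hence the ratio is
\[
 r^{1-\max\{0,a+b-1\}-\max\{0,b-a\}}.
\]
This is not obviously $r^{\gamma_{\rm mix}}$, so if needed I would also cite the square result of Bonet--Defant--Peris--Ramanujan, which gives exactly $\rho_{p,q}(r)\asymp r^{\gamma_{\rm mix}}$ in the mixed range. That is the cleanest route, and I would simply invoke it after the complementation step, as in Proposition~\ref{prop:l1below}.

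\textbf{Second lower bound.} I would reuse the sign-matrix duality argument of Lemma~\ref{lem:lower}, now with $p\le2\le q$. Take an $m\times n$ Rademacher matrix $A$ and use \cite[Equation~(2)]{LatalaStrzeleckaJFA2025} for two estimates:
\begin{itemize}
\item $\|A:\ell_{p'}^n\to\ell_q^m\|$, where both exponents are at least $2$, giving a bound of the form $n^{a-1/2}(m^{b}\sqrt n+\sqrt m\cdot\text{lower terms})$, i.e.\ roughly $n^{a-1/2}(n^{1/2}m^b+m^{b}\dots)$;
\item $\|A:\ell_p^n\to\ell_{q'}^m\|$, where both exponents are at most $2$ (domain $\le2$, target $q'\le2$), giving roughly $m^{1/q'-1/2}(\sqrt n+\sqrt m)+n^{1-a}$-type terms.
\end{itemize}
Then $\pi(z_A)\ge nm/\|B_A\|$, and dividing by $\eps(z_A)$ and simplifying via $uv/(u+v)\asymp\min\{u,v\}$ should produce $m^{b-1/2}r^{1/2}\min\{n^{1-a},m^{1/2}\}$.

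\textbf{Main obstacle.} The hard part is this last step. Latała--Strzelecka's formula in these mixed exponent ranges has several competing terms, including $\max$ over rows and columns and logarithm-free polynomial terms, so I must extract the correct dominant terms in each regime ($n\le m$ versus $n\ge m$, and $n^{1-a}$ versus $m^{1/2}$). If the direct random-matrix bound proves too lossy, I would instead take a random $r'\times m$ submatrix with $r'=\min\{n,m\}$. As a fallback, I would use an $m\times k$ sign matrix with $k=\min\{n,m\}$ columns and place $\ell_p^k\subset\ell_p^n$ on blocks of size $n/k$ via averaging vectors, which realizes the factor $\min\{n^{1-a},m^{1/2}\}$.
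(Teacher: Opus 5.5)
There is a genuine gap in the second lower bound, and a computational error in the first. Your three upper bounds match the paper's argument exactly: compare with $\ell_1^n$ and use Proposition~\ref{prop:l1q}, compare with $\ell_\infty^m$ and use Corollary~\ref{cor:pinfty}, and use Lemma~\ref{lem:dimension}. The lower bound $r^{\gamma_{\rm mix}}$ is also valid via $1$-complementation plus the Bonet--Defant--Peris--Ramanujan square theorem.

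\textbf{The diagonal computation.} The functionals norming $\pi$ on $\ell_p^r\otimes\ell_q^r$ are bilinear forms on $\ell_p^r\times\ell_q^r$, not on $\ell_p^r\times\ell_{q'}^r$. The diagonal form there has norm $\|I:\ell_p^r\to\ell_{q'}^r\|=r^{\max\{0,1-a-b\}}$. Your ratio reduces to $r^{\min\{1,2-a-b\}}$ in the mixed range, and this is false. At $(p,q)=(2,\infty)$ it gives $r$, whereas your own upper bound $m^br^a$ gives $\sqrt r$ when $n=m=r$. With the correct norm, the diagonal tensor yields exactly $r^{\min\{a+b,2-a-b\}}$. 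This is the paper's argument, and it needs no citation.

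\textbf{The gap: the second lower bound.} You leave $m^{b-1/2}r^{1/2}\min\{n^{1-a},m^{1/2}\}$ as the ``main obstacle'', and the shapes you sketch for the random-matrix norms are not right in this range. Chevet's inequality (with a direct row/column computation at the endpoints $p=1$ or $q=\infty$) gives, for an $m\times n$ Rademacher matrix,
\[
\mathbb E\|A:\ell_{p'}^n\to\ell_q^m\|\lesssim_{p,q} n^{a-1/2}\bigl(m^b+n^{1/2}\bigr),\qquad
\mathbb E\|A:\ell_p^n\to\ell_{q'}^m\|\lesssim_{p,q} m^{1/2-b}\bigl(m^{1/2}+n^{1-a}\bigr).
\]
There is no product $m^b\sqrt n$ in the first bound and no term $m^{1/2-b}\sqrt n$ in the second. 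Either of your variants loses a power of the dimensions and falls short of the target when $n\ge m$.

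With the correct bounds, $nm/(\eps(z_A)\|B_A\|)\asymp\min\{n^{1/2},m^b\}\min\{n^{1-a},m^{1/2}\}$. This dominates the target: the orders are equal when $n\ge m$, and it is larger when $n<m$. So your route can be completed, but as written it is not.

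Your fallbacks do not help. Coordinate restriction and block-averaging embeddings of $\ell_p^k$ into $\ell_p^n$ only reproduce $\rho_{p,q}(n,m)\ge\rho_{p,q}(k,m)$, which cannot capture the growth in $n$ beyond $k$.

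\textbf{The paper's route.} The paper avoids random matrices here entirely. Lemma~\ref{lem:isomorphism}, with $\|I^{(m)}_{2,q}\|=1$ and $\|I^{(m)}_{q,2}\|=m^{1/2-b}$, gives $\rho_{p,q}(n,m)\ge m^{b-1/2}\rho_{p,2}(n,m)$. Theorem~\ref{thm:A} for the same-side pair $(p,2)$ then gives $\rho_{p,2}(n,m)\asymp_p r^{1/2}\min\{n^{1-a},m^{1/2}\}$.
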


\begin{proof}
For the first upper estimate, compare $\ell_p^n$ with $\ell_1^n$. Formula \eqref{eq:identity-lp} and Lemma~\ref{lem:isomorphism} give
\[
 \rho_{p,q}(n,m)
 \le n^{1-a}\rho(\ell_1^n,\ell_q^m)
 \lesssim_q n^{1-a}r^{1-b}.
\]
For the second estimate compare $\ell_q^m$ with $\ell_\infty^m$. Since
\[
 \|I_{q,\infty}^{(m)}\|=1,
 \qquad
 \|I_{\infty,q}^{(m)}\|=m^b,
\]
Corollary~\ref{cor:pinfty} yields
\[
 \rho_{p,q}(n,m)
 \le m^b\rho(\ell_p^n,\ell_\infty^m)
 \lesssim_p m^b r^a.
\]
Finally, Lemma~\ref{lem:dimension} gives $\rho_{p,q}(n,m)\le r$.

For the lower estimate, use the diagonal tensor
\[
 z_r=\sum_{j=1}^r e_j\otimes e_j.
\]
Under the injective tensor/operator identification,
\[
 \eps(z_r)
 =r^{\max\{0,a+b-1\}}.
\]
The diagonal bilinear form on the same $r$ coordinates has norm
\[
 r^{\max\{0,1-a-b\}},
\]
and its pairing with $z_r$ is $r$. Hence
\[
 \pi(z_r)\ge r^{1-\max\{0,1-a-b\}}.
\]
Dividing gives
\[
 \frac{\pi(z_r)}{\eps(z_r)}
 \ge r^{\min\{a+b,2-a-b\}}.
\]

For the second lower estimate, compare the second tensor factor with
$\ell_2^m$. Since
\[
 \|I_{q,2}^{(m)}\|=m^{1/2-b},
 \qquad
 \|I_{2,q}^{(m)}\|=1,
\]
Lemma~\ref{lem:isomorphism}, applied with the roles of $\ell_q^m$ and
$\ell_2^m$ reversed, gives
\[
 \rho_{p,q}(n,m)
 \ge m^{b-1/2}\rho_{p,2}(n,m).
\]
Theorem~\ref{thm:A} yields
\[
 \rho_{p,2}(n,m)
 \asymp_p r^{1/2}\min\{n^{1-a},m^{1/2}\},
\]
and hence
\[
 \rho_{p,q}(n,m)
 \gtrsim_{p,q}
 m^{b-1/2}r^{1/2}\min\{n^{1-a},m^{1/2}\}.
\]
Together with the diagonal estimate, this proves the asserted lower bound.
\end{proof}

\begin{proposition}[Saturation in the interior mixed range]\label{prop:saturation}
Let $1<p<2<q<p'<\infty$ and $m\in\N$. Put
\[
 a=\frac1p,\qquad b=\frac1q,
 \qquad
 \alpha_{p,q}:=
 \frac{(a+b-1)(\frac12-b)}{a-\frac12}.
\]
Then, uniformly in $n$,
\[
 \rho_{p,q}(n,m)\lesssim_{p,q}m^{1-\alpha_{p,q}},
\]
and the exponent is sharp in the saturation limit:
\[
 \sup_{n\ge1}\rho_{p,q}(n,m)
 \asymp_{p,q}m^{1-\alpha_{p,q}}.
\]
\end{proposition}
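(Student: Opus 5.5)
The plan is to prove the upper bound by a decomposition of the first tensor factor into "heavy" and "light" coordinates, with the split size chosen as a power of $m$. The lower bound will come from a random sign tensor whose first dimension is that same power of $m$, analysed with the Lata{\l}a--Strzelecka estimates already used in Lemma~\ref{lem:randommatrix}.

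Throughout, note that $1<p<2<q<p'$ means $a>\tfrac12>b$ and $a+b>1$. By the dimension bound (Lemma~\ref{lem:dimension}) only the regime $n\ge m$ matters, and there Proposition~\ref{prop:mixedrect} alone gives only $\min\{n^{1-a}m^{1-b},m\}$. The exponent $1-\alpha_{p,q}$ equals $1$ on both boundary lines $a+b=1$ and $b=\tfrac12$. This suggests that it arises from balancing a bound that is good when the effective first dimension is small, namely $N^{1-a}m^{1-b}$, against one that uses the cotype-$2$/type-$2$ geometry when many coordinates are involved. The factor $(a-\tfrac12)^{-1}$ in $\alpha_{p,q}$ points to Hilbertian ($\ell_2$) comparisons on the first factor.

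\emph{Upper bound.} Fix $z\in\ell_p^n\otimes\ell_q^m$ with $\eps(z)=1$, viewed as an operator $T:\ell_{p'}^n\to\ell_q^m$ of rank at most $m$.
\begin{itemize}
\item First I would perform a Lewis/Maurey-type change of density on $\ell_p^n$ adapted to $T$. Concretely, I would factor $T^*:\ell_{q'}^m\to\ell_p^n$ through a weighted $\ell_2$ using Pietsch/Maurey factorization; this is available since $\ell_p$, $p<2$, has cotype $2$ and $\ell_{q'}$, $q'<2$, is the dual side.
\item After the weighting, I would order the first-factor coordinates by the size of their weights. Let $S$ be the set of the $N$ heaviest coordinates and split $z=z_S+z_{S^c}$; coordinate restrictions are contractive for $\eps$.
\item On $z_S$, Proposition~\ref{prop:mixedrect} with $n$ replaced by $N$ gives $\pi(z_S)\lesssim N^{1-a}m^{1-b}$.
\item On $z_{S^c}$ every weight is at most of order $1/N$. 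I would compare $\ell_p$ with $\ell_2$ on the light coordinates, at a cost of the form $N^{-(a-1/2)}$ times a power of $m$. Then I would apply the Hilbertian-type bound $\rho_{2,q}(\cdot,m)\lesssim m^{b}r^{1/2}\le m^{b+1/2}$, which follows from $\rho_{2,q}\le m^b\rho_{2,\infty}$ and Corollary~\ref{cor:pinfty}. This should yield a bound decreasing in $N$, roughly $m^{c}N^{-(a-1/2)}$.
\item Choosing $N=m^{\gamma}$ to equate the two terms should produce exactly $m^{1-\alpha_{p,q}}$. A consistency check is that $N^{1-a}m^{1-b}=m^{1-\alpha}$ forces $\gamma=(b-\alpha)/(1-a)$.
\end{itemize}

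\emph{Lower bound.} Take $n=N\asymp m^{\gamma}$ with the same $\gamma$ and a random $m\times N$ sign matrix $A$. I would bound $\eps(z_A)=\|A:\ell_{p'}^N\to\ell_q^m\|$ by \cite[Equation~(2)]{LatalaStrzeleckaJFA2025} in the regime $p',q\ge2$. I would bound $\|B_A\|=\|A:\ell_p^N\to\ell_{q'}^m\|$ in the regime $p,q'\le2$. Then I would use $\langle B_A,z_A\rangle=Nm$ and Markov's inequality, as in Lemma~\ref{lem:lower}, to get $\rho\gtrsim Nm/(\|B_A\|\,\eps(z_A))$, and check that at $N=m^\gamma$ this equals $m^{1-\alpha_{p,q}}$. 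If a pure sign matrix falls short, I would instead use a block construction: $N/m$ stacked copies, or a Kronecker product of the diagonal tensor with a sign matrix. This interpolates between the two lower-bound shapes of Proposition~\ref{prop:mixedrect}.

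The main obstacle is the light-part estimate in the upper bound. It requires a weighting in which the residual tensor on $S^c$ genuinely behaves like an element of $\ell_2^{n}\otimes\ell_q^m$, with a loss only of order $N^{-(a-1/2)}$ in the relevant norms, and not one like the crude $n^{1/p-1/2}$. I would first try Maurey's extrapolation/factorization theorem for operators from $\ell_{q'}$ (type $2$ dual) into $\ell_p$ (cotype $2$), which gives a density $w$ with $\sum w_i=1$. I would then use $\sum_{i\notin S}w_i^{\,s}\le N^{1-s}$ for the appropriate power $s$ to extract the decay in $N$. If the exponents do not match, the fallback is to make the splitting dyadic over weight levels and sum the resulting geometric series.
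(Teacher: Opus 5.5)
\textbf{There are genuine gaps in both directions, and the lower-bound construction you name provably cannot reach the target exponent.}

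\textbf{Lower bound.} For a dense $m\times N$ sign matrix, the estimates you intend to use are two-sided:
$\eps(z_A)\asymp N^{a-1/2}m^b+N^a$ and $\|B_A\|\asymp m^{1-b}+m^{1/2-b}N^{1-a}$. Hence the pairing bound $Nm/(\|B_A\|\,\eps(z_A))$ is $\lesssim m^{1/2+b}$ for every $N$; this is just the second lower bound of Proposition~\ref{prop:mixedrect}. But
$1-\alpha_{p,q}=\frac12+b+\frac{(1/2-b)^2}{a-1/2}=2-a-b+\frac{(a+b-1)^2}{a-1/2}$. So both the dense sign matrix and the diagonal tensor fall short by a fixed power of $m$.

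Your fallbacks do not help. Stacking copies of the columns changes nothing, since the repeated columns span a $1$-complemented isometric copy of the original $\ell_p$ factor. A block-diagonal product of a $k$-dimensional diagonal with $l\times L$ sign blocks ($kl=m$) gives, by the same pairing, only $k^{2-a-b}l^{1/2+b}\le\max\{m^{2-a-b},m^{1/2+b}\}$.

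What is missing is an interpolation between the two extremes that is not multiplicative, namely a Rosenthal-type sparse construction:
\begin{itemize}
\item Take independent $f_1,\dots,f_m$ on $\{-1,0,1\}^m$ with product measure, with $f_j=\pm1$ with probability $\delta/2$ each and $f_j=0$ otherwise, and set $\delta=m^{-(1/2-b)/(a-1/2)}$.
\item Split $y\in B_{\ell_{q'}^m}$ at height $\delta^{1/q'}$. Use von Bahr--Esseen on the large part and $\|\cdot\|_{L_p}\le\|\cdot\|_{L_2}$ on the small part. This gives $\eps(\sum_j f_j\otimes e_j)\lesssim\delta^{1-b}$.
\item By Rosenthal's inequality, $e_j\mapsto f_j$ has norm $\lesssim\delta^{1-a}$ from $\ell_q^m$ to $L_{p'}$, and it pairs to $m\delta$ with the tensor.
\end{itemize}
This yields $m\delta^{a+b-1}=m^{1-\alpha_{p,q}}$ inside $\ell_p^{3^m}\otimes\ell_q^m$ after rescaling coordinates.

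The paper obtains the lower bound abstractly instead: trace duality and discretization of an $L_{p'}$-factorization give $\sup_n\rho_{p,q}(n,m)=m/\gamma_{p'}(I_{\ell_q^m})$. Note also that your assertion that $N\asymp m^{\gamma}$ already suffices is exactly the question left open in Section~\ref{sec:questions}, namely whether saturation occurs once $n$ exceeds a fixed power of $m$. It cannot be treated as a routine check.

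\textbf{Upper bound.} Your first step is false as stated: there is no change-of-density factorization of $T^*:\ell_{q'}^m\to\ell_p^n$ through a weighted $\ell_2$ with constant independent of $m$. Take the formal identity $\ell_{q'}^m\to\ell_p^m$, which has norm $m^{a+b-1}$. Any factorization $D_gV$ with $V:\ell_{q'}^m\to\ell_2^m$ forces $V=D_g^{-1}$. Hence
$\|D_g:\ell_2^m\to\ell_p^m\|\,\|V\|\ge\|g\|_s/\min_i|g_i|\ge m^{a-1/2}$, where $1/s=a-1/2$. This is a loss of $m^{1/2-b}$. Maurey's theorem needs type $2$ of the \emph{domain}, and the type-$2$ constant of $\ell_{q'}^m$ is exactly $m^{1/2-b}$.

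More fundamentally, by the identity $\sup_n\rho_{p,q}(n,m)=m/\gamma_{p'}(I_{\ell_q^m})$, a bound $\rho_{p,q}(n,m)\lesssim m^{1-\alpha_{p,q}}$ uniform in $n$ is \emph{equivalent} to the Gluskin--Pietsch--Puhl lower estimate $\gamma_{p'}(I_{\ell_q^m})\gtrsim m^{\alpha_{p,q}}$. That is a genuine generalized Khintchine inequality. Your heavy/light scheme, whose light-part estimate is never made precise, contains no ingredient that could produce it.

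The paper's route is short:
\begin{itemize}
\item Write $\rho_{p,q}(n,m)=\sup|\operatorname{tr}(TS)|$ over contractions $T:\ell_{p'}^n\to\ell_q^m$ and $S:\ell_q^m\to\ell_{p'}^n$.
\item Average $U=TS$ over signed permutations to get $\frac{\operatorname{tr}U}{m}I$. This shows $\rho_{p,q}(n,m)\le m/\gamma_{p'}(I_{\ell_q^m})$.
\item Invoke the cited estimate $\gamma_{p'}(I_{\ell_q^m})\asymp m^{\alpha_{p,q}}$.
\end{itemize}
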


\begin{proof}
For an operator $U:X\to X$ and $1<P<\infty$, let
\[
 \gamma_P(U)
 :=\inf\{\|B\|\,\|A\|:
 U=BA,\ A:X\to L_P(\mu),\ B:L_P(\mu)\to X\},
\]
where the infimum runs over all measure spaces $(\Omega,\mu)$ and all such factorizations.

Fix $n$ and write $X=\ell_q^m$. By trace duality for the nuclear norm,
\[
 \rho_{p,q}(n,m)
 =\sup\bigl\{|\operatorname{tr}(TS)|:
 T:\ell_{p'}^n\to X,\ S:X\to\ell_{p'}^n,
 \ \|T\|\le1,\ \|S\|\le1\bigr\}.
\]
Every $U=TS$ in this supremum factors through the $L_{p'}$-space
$\ell_{p'}^n$, so $\gamma_{p'}(U)\le1$. Hence
\begin{equation}\label{eq:saturation-trace}
 \rho_{p,q}(n,m)
 \le
 \sup_{\gamma_{p'}(U)\le1}|\operatorname{tr}U|.
\end{equation}

Let $\mathcal G$ be the finite group of signed permutation isometries of
$X$. For every $U:X\to X$,
\[
 \frac1{|\mathcal G|}\sum_{G\in\mathcal G}G^{-1}UG
 =\frac{\operatorname{tr}U}{m}I_X.
\]
Since $\gamma_{p'}$ is an operator-ideal norm, conjugation by elements of
$\mathcal G$ preserves it and convexity gives
\[
 \gamma_{p'}\!\left(\frac{\operatorname{tr}U}{m}I_X\right)
 \le \gamma_{p'}(U).
\]
Consequently,
\[
 \sup_{\gamma_{p'}(U)\le1}|\operatorname{tr}U|
 =\frac{m}{\gamma_{p'}(I_X)}.
\]
Together with \eqref{eq:saturation-trace}, this proves
\begin{equation}\label{eq:saturation-upper}
 \rho_{p,q}(n,m)
 \le \frac{m}{\gamma_{p'}(I_{\ell_q^m})}.
\end{equation}

Conversely, fix a factorization $I_X=BA$ through an $L_{p'}(\mu)$-space.
Since $A(X)$ is finite dimensional, for every $\delta>0$ there is a finite
measurable partition whose conditional expectation $P$ satisfies
\[
 \|(P-I)y\|_{p'}\le\delta\|y\|_{p'}
 \qquad(y\in A(X)).
\]
The range of $P$ is isometric to a finite-dimensional weighted $\ell_{p'}$-space,
and hence to $\ell_{p'}^N$ after rescaling coordinates. For $\delta$ small,
$V:=BPA$ is invertible on $X$ and
\[
 I_X=V^{-1}BPA,
\]
with factorization constant tending to $\|B\|\,\|A\|$ as $\delta\downarrow0$.

Identifying the range of $P$ with $\ell_{p'}^N$, the two legs in
$I_X=V^{-1}BPA$ give, after reciprocal rescaling, operators
$S:X\to\ell_{p'}^N$ and $T:\ell_{p'}^N\to X$ with
$\|S\|,\|T\|\le1$ and
\[
 |\operatorname{tr}(TS)|
 \longrightarrow \frac{m}{\|B\|\,\|A\|}
 \qquad(\delta\downarrow0).
\]
Taking the infimum over the original factorization therefore yields
\[
 \sup_{n\ge1}\rho_{p,q}(n,m)
 =\frac{m}{\gamma_{p'}(I_{\ell_q^m})}.
\]

For $2<q<p'<\infty$, the finite-dimensional $L_{p'}$-factorization estimate
of Gluskin, Pietsch and Puhl \cite{GluskinPietschPuhl1980}, recorded explicitly by
Figiel, Johnson and Schechtman \cite[Section~2]{FigielJohnsonSchechtman1988}, gives

\[
 \gamma_{p'}(I_{\ell_q^m})
 \asymp_{p,q}
 m^{\alpha_{p,q}},
 \qquad
 \alpha_{p,q}
 =\frac{(\frac1q-\frac1{p'})(\frac12-\frac1q)}
 {\frac12-\frac1{p'}}.
\]
Since $1/p'=1-a$ and $1/q=b$, this is exactly the exponent displayed in the
statement. The conclusion follows from \eqref{eq:saturation-upper} and the saturation identity.
The partition argument leaves the dimension of the resulting
finite-dimensional \(\ell_p\)-space uncontrolled. Consequently, a
quantitative relation between \(n\) and \(m\) in the saturation regime
requires additional information.
\end{proof}

\begin{proof}[Proof of Theorem~\ref{thm:B}]
The two-sided bounds are Proposition~\ref{prop:mixedrect}, and the saturation estimate in the interior mixed range is Proposition~\ref{prop:saturation}. It remains to prove the exact statement.

Suppose first that $n\le m$ and $a+b\ge1$. Proposition~\ref{prop:l1q} gives the exact endpoint value
\[
 \rho(\ell_1^n,\ell_q^m)=n^{1-b}.
\]
Comparing $\ell_p^n$ with $\ell_1^n$ therefore yields
\[
 \rho_{p,q}(n,m)\le n^{1-a}n^{1-b}=n^{2-a-b}.
\]
Since $r=n$ and $\gamma_{\rm mix}=2-a-b$, the lower estimate in Proposition~\ref{prop:mixedrect} forces equality.

Suppose next that $n\ge m$ and $a+b\le1$. Corollary~\ref{cor:pinfty} gives the exact endpoint value
\[
 \rho(\ell_p^n,\ell_\infty^m)=m^a.
\]
Comparing $\ell_q^m$ with $\ell_\infty^m$ gives
\[
 \rho_{p,q}(n,m)\le m^b m^a=m^{a+b}.
\]
Now $r=m$ and $\gamma_{\rm mix}=a+b$, so equality again follows from the lower estimate.

The two alternatives above are exactly
\[
 (n-m)(a+b-1)\le0.
\]
When $n=m$ this condition is automatic, and the exact square formula follows. The reversed mixed range follows by Lemma~\ref{lem:duality} and the flip symmetry.
\end{proof}

\section{Operator and XOR-game consequences}\label{sec:consequences}

We next translate the rectangular estimates into operator and XOR-game formulations.

If $E$ and $F$ are finite-dimensional and $T:E^*\to F$ is finite rank, write $\nu(T)$ for its nuclear norm. Under the standard tensor--operator identification,
\[
 \eps(z)=\|T_z\|,
 \qquad
 \pi(z)=\nu(T_z),
\]
and therefore
\begin{equation}\label{eq:operatorrho}
 \rho(E,F)=\sup_{0\neq T:E^*\to F}\frac{\nu(T)}{\|T\|}.
\end{equation}

\begin{corollary}\label{cor:operator}
Let $n,m\in\N$ and assume either $1\le p,q\le2$ or $2\le p,q\le\infty$. With $r=\min\{n,m\}$,
\[
 \sup_{0\neq T:\ell_{p'}^n\to\ell_q^m}\frac{\nu(T)}{\|T\|}
 \asymp_{p,q}
 r^{1/2}\min\{n^{\eta(p)},m^{\eta(q)}\}.
\]
\end{corollary}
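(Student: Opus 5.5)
The plan is to translate Theorem~\ref{thm:A} through the operator formula \eqref{eq:operatorrho}, with $E=\ell_p^n$ and $F=\ell_q^m$. In that setting $E^*=\ell_{p'}^n$, so \eqref{eq:operatorrho} reads
\[
 \rho_{p,q}(n,m)=\sup_{0\neq T:\ell_{p'}^n\to\ell_q^m}\frac{\nu(T)}{\|T\|},
\]
and the asserted equivalence is then exactly the statement of Theorem~\ref{thm:A}.

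The step that needs checking is the identification itself. For finite-dimensional spaces, a tensor $z=\sum_k x_k\otimes y_k\in E\otimes F$ corresponds to $T_z(x^*)=\sum_k x^*(x_k)y_k$. The injective norm of $z$ is then $\sup_{\|x^*\|,\|y^*\|\le1}|\langle x^*\otimes y^*,z\rangle|=\|T_z\|$. The projective norm is the infimum of $\sum_k\|x_k\|\|y_k\|$ over all representations of $z$, which is by definition the nuclear norm $\nu(T_z)$ of $T_z:E^*\to F$. Here one uses $E^{**}=E$ in finite dimensions, so that every nuclear representation of $T_z$ comes from a tensor representation and conversely. The map $z\mapsto T_z$ is a bijection of $E\otimes F$ onto the space of all operators $E^*\to F$, so the two suprema run over corresponding sets.

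There is no real obstacle, since the corollary is a restatement of Theorem~\ref{thm:A}. The only care needed is the bookkeeping of exponents: the domain is $\ell_{p'}^n$, not $\ell_p^n$. The constants are those of Theorem~\ref{thm:A} and therefore depend only on $p$, $q$ and the scalar field.
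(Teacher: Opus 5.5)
Your proposal is correct and follows the paper's proof, which likewise just combines the tensor--operator identification \eqref{eq:operatorrho} (with $E=\ell_p^n$, $E^*=\ell_{p'}^n$, $F=\ell_q^m$) with Theorem~\ref{thm:A}. Your check that $\eps(z)=\|T_z\|$ and $\pi(z)=\nu(T_z)$ in finite dimensions is the justification the paper gives just before the corollary.
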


\begin{proof}
Combine \eqref{eq:operatorrho} with Theorem~\ref{thm:A}.
\end{proof}

For the real scalar field, Aubrun, Lami, Palazuelos, Szarek and Winter \cite[Eq.~(25)]{AubrunEtAl2020} showed that for fixed local generalized probabilistic theories $A$ and $B$,
\[
 \sup_{AB,G}\frac{\beta_{\rm ALL}(G)}{\beta_{\rm LO}(G)}
 =\rho(V_A,V_B),
\]
where $V_A,V_B$ are their base-norm spaces. They also proved that every finite-dimensional normed space is $2$-isomorphic to a base-norm space.

\begin{corollary}\label{cor:xor}
Assume $\K=\mathbb R$ and either $1\le p,q\le2$ or $2\le p,q\le\infty$. For every $n,m\in\N$ there are local GPTs whose base-norm spaces are at Banach--Mazur distance at most $2$ from $\ell_p^n$ and $\ell_q^m$, respectively, and for which
\[
 \sup_{AB,G}\frac{\beta_{\rm ALL}(G)}{\beta_{\rm LO}(G)}
 \asymp_{p,q}
 r^{1/2}\min\{n^{\eta(p)},m^{\eta(q)}\}.
\]
\end{corollary}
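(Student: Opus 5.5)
The plan is to reduce Corollary~\ref{cor:xor} to Theorem~\ref{thm:A} by combining two facts from \cite{AubrunEtAl2020}: the identity \cite[Eq.~(25)]{AubrunEtAl2020}, which expresses the maximal global-to-local bias ratio as $\rho(V_A,V_B)$, and the realization theorem, which says every finite-dimensional real normed space is $2$-isomorphic to a base-norm space.

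First, I will apply the realization theorem to $\ell_p^n$ and to $\ell_q^m$ separately. This gives local GPTs $A$ and $B$ with base-norm spaces $V_A$ and $V_B$, together with isomorphisms $S:\ell_p^n\to V_A$ and $S':\ell_q^m\to V_B$ satisfying $\|S\|\,\|S^{-1}\|\le2$ and $\|S'\|\,\|S'^{-1}\|\le2$. In particular the Banach--Mazur distances are at most $2$, as the corollary requires.

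Next, I will apply Lemma~\ref{lem:isomorphism} once in each tensor factor, and in both directions. This gives
\[
 \tfrac14\,\rho(\ell_p^n,\ell_q^m)\le\rho(V_A,V_B)\le4\,\rho(\ell_p^n,\ell_q^m).
\]
The lower bound comes from applying the lemma to the inverse isomorphisms $S^{-1}$ and $S'^{-1}$. By \cite[Eq.~(25)]{AubrunEtAl2020}, the supremum of $\beta_{\rm ALL}(G)/\beta_{\rm LO}(G)$ equals $\rho(V_A,V_B)$. Theorem~\ref{thm:A} then gives the asserted order $r^{1/2}\min\{n^{\eta(p)},m^{\eta(q)}\}$, with the absolute factor $4$ absorbed into the constants depending on $p$ and $q$.

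There is no genuine obstacle here; the argument is a direct combination of cited results. The only point to check is that the realization theorem and Eq.~(25) are stated over the real field, which is why the corollary assumes $\K=\mathbb R$. One should also confirm that the base-norm spaces are finite dimensional with $\dim V_A=n$ and $\dim V_B=m$, so that the identification of $\rho(V_A,V_B)$ with the tensor ratio is meaningful. This holds because the isomorphisms $S$ and $S'$ preserve dimension.
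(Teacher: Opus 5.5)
Your proposal is correct and follows the paper's proof exactly. You take base-norm spaces $2$-isomorphic to $\ell_p^n$ and $\ell_q^m$ from \cite{AubrunEtAl2020}, apply Lemma~\ref{lem:isomorphism} in both variables and both directions to change $\rho$ by at most a factor $4$, and conclude with \cite[Eq.~(25)]{AubrunEtAl2020} and Theorem~\ref{thm:A}. (A cosmetic point: with $S:\ell_p^n\to V_A$, it is $S$ and $S'$ themselves that give the lower bound and their inverses that give the upper bound, but the constant $\|S\|\,\|S^{-1}\|$ is the same either way, so this changes nothing.)
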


\begin{proof}
Choose the base-norm spaces using \cite{AubrunEtAl2020}. Lemma~\ref{lem:isomorphism}, applied in both variables, changes $\rho$ by at most a universal multiplicative factor. Theorem~\ref{thm:A} gives the stated scale.
\end{proof}

\section{The remaining mixed problem}
\label{sec:questions}

Theorem~\ref{thm:A} gives a complete rectangular formula when the two
exponents lie on the same side of \(2\). In the mixed range,
Theorem~\ref{thm:B} gives the sharp polynomial order whenever
\[
(n-m)\left(\frac1p+\frac1q-1\right)\le0.
\]
The complementary mixed region remains open.

By duality and symmetry, it is enough to consider
\[
1<p<2<q<p',
\qquad
(n-m)\left(\frac1p+\frac1q-1\right)>0.
\]
The diagonal tensor gives the square scale in this region. On the other
hand, Proposition~\ref{prop:saturation} identifies the saturation scale
\[
m^{1-\alpha_{p,q}}
\]
after taking the supremum over the first dimension.

The unresolved point is a finite-dimensional description of the transition
between these two scales. More precisely, the proof of
Proposition~\ref{prop:saturation} approximates a factorization through
an \(L_{p'}\)-space by factorizations through finite-dimensional
\(\ell_{p'}\)-spaces, while the dimension required in this approximation remains
uncontrolled. Determining that dimension amounts to finding a quantitative
threshold relating \(n\) and \(m\).

In particular, whether the saturation order is attained whenever \(n\)
exceeds a fixed power of \(m\) remains open.

A natural question is whether, throughout the remaining mixed region,
\[
\rho_{p,q}(n,m)
\asymp_{p,q}
\min\left\{
n^{1-1/p}m^{1-1/q},
\,
m^{1-\alpha_{p,q}}
\right\},
\]
with the corresponding statement in the reversed mixed range obtained
by duality and by interchanging the two tensor factors.

\section*{Acknowledgements}
We are grateful to Andreas Defant for kindly drawing our attention to the papers of Bonet, Defant, Peris and Ramanujan and of Defant and Mascioni, and for helpful correspondence. These references were important in clarifying the historical context of the square problem and its connection with the classical theory of $\eps$--$\pi$ continuous operators.

\section*{Declarations}

\noindent\textbf{Competing interests.} The authors declare that they have no competing interests.

\noindent\textbf{Funding.} D. N\'u\~nez-Alarc\'on, D. Pellegrino, and J. Santos are supported by Grants No. 406457/2023-9 (CNPq/MCTI No. 10/2023) and No. 403964/2024-5 (MCTI/CNPq No. 16/2024), both from the Conselho Nacional de Desenvolvimento Cient\'ifico e Tecnol\'ogico (CNPq, Brazil). In addition, D. Pellegrino is supported by Grant No. 305807/2025-0.

\noindent\textbf{Data availability.} No data were generated or analyzed in this study.

\noindent\textbf{Declaration of generative AI and AI-assisted technologies in the manuscript preparation process.} During the preparation of this work, the authors used OpenAI's ChatGPT to assist with language editing, organization, and presentation of the manuscript, including checks of expository consistency and references. The manuscript was reviewed and edited by the authors, who take full responsibility for its content.

\medskip
\noindent\textbf{Author addresses.}

\noindent Daniel N\'u\~nez-Alarc\'on: Departamento de Matem\'aticas, Facultad de Ciencias, Universidad Nacional de Colombia, Sede Bogot\'a, Bogot\'a, Colombia.\\
\texttt{dnuneza@unal.edu.co}

\noindent Daniel Marinho Pellegrino: Departamento de Matem\'atica, Centro de Ci\^encias Exatas e da Natureza, Universidade Federal da Para\'iba, Jo\~ao Pessoa, PB, Brazil.\\
\texttt{daniel.pellegrino@academico.ufpb.br}

\noindent Joedson Silva dos Santos: Departamento de Matem\'atica, Centro de Ci\^encias Exatas e da Natureza, Universidade Federal da Para\'iba, Jo\~ao Pessoa, PB, Brazil.\\
\texttt{joedson.santos@academico.ufpb.br}


\begin{thebibliography}{99}

\bibitem{AubrunEtAl2020}
G. Aubrun, L. Lami, C. Palazuelos, S. J. Szarek and A. Winter,
Universal gaps for XOR games from estimates on tensor norm ratios,
\emph{Comm. Math. Phys.} \textbf{375} (2020), 679--724.

\bibitem{AubrunMullerHermes2025}
G. Aubrun and A. M\"uller-Hermes,
Limit formulas for norms of tensor power operators,
\emph{J. Funct. Anal.} \textbf{289} (2025), Paper No. 111113.

\bibitem{AubrunMullerHermes2026}
G. Aubrun and A. M\"uller-Hermes,
Asymptotic tensor powers of Banach spaces,
\emph{Ann. Inst. Fourier (Grenoble)} \textbf{76} (2026), 1341--1368.

\bibitem{BonetDefantPerisRamanujan1994}
J. Bonet, A. Defant, A. Peris and M. S. Ramanujan,
Coincidence of topologies on tensor products of K\"othe echelon spaces,
\emph{Studia Math.} \textbf{111} (1994), no. 3, 263--281.

\bibitem{DefantFloret}
A. Defant and K. Floret,
\emph{Tensor Norms and Operator Ideals},
North-Holland Mathematics Studies, Vol. 176, North-Holland, Amsterdam, 1993.

\bibitem{DefantMascioni1990}
A. Defant and V. Mascioni,
Limit orders of $\eps$--$\pi$ continuous operators,
\emph{Math. Nachr.} \textbf{145} (1990), 337--344.

\bibitem{DefantMichels2006}
A. Defant and C. Michels,
Norms of tensor product identities,
\emph{Note Mat.} \textbf{25} (2006), no. 1, 129--166.


\bibitem{FigielJohnsonSchechtman1988}
T. Figiel, W. B. Johnson and G. Schechtman,
Random sign embeddings from $\ell_r^n$, $2<r<\infty$,
\emph{Proc. Amer. Math. Soc.} \textbf{102} (1988), no. 1, 102--106.


\bibitem{GluskinPietschPuhl1980}
E. D. Gluskin, A. Pietsch and J. Puhl,
A generalization of Khintchine's inequality and its application in the theory of operator ideals,
\emph{Studia Math.} \textbf{67} (1980), no. 2, 149--155.

\bibitem{GuptaMisraRay}
R. Gupta, G. Misra and S. K. Ray,
On a variant of the Grothendieck inequality and estimates on tensor product norms,
arXiv:2305.13270v2 [math.FA], 2023.


\bibitem{LatalaStrzeleckaJFA2025}
R. Lata{\l}a and M. Strzelecka,
Operator $\ell_p\to\ell_q$ norms of random matrices with iid entries,
\emph{J. Funct. Anal.} \textbf{288} (2025), no. 3, Paper No. 110720, 38 pp.

\bibitem{PisierGT}
G. Pisier,
Grothendieck's theorem, past and present,
\emph{Bull. Amer. Math. Soc. (N.S.)} \textbf{49} (2012), 237--323.

\bibitem{Ryan}
R. A. Ryan,
\emph{Introduction to Tensor Products of Banach Spaces},
Springer Monographs in Mathematics, Springer-Verlag, London, 2002.

\end{thebibliography}
\end{document}